\newcommand{\abso}[1]{\left|#1\right|}
\newtheorem{theorem}{Theorem}
\newtheorem{definition}[theorem]{Definition}
\newtheorem{corollary}[theorem]{Corollary}
\newtheorem{proposition}[theorem]{Proposition}
\newtheorem{lemma}[theorem]{Lemma}
\newtheorem{remark}[theorem]{Remark}
\newcommand{\e}{\varepsilon}
\newcommand{\bine}{\mathbin{\varepsilon}}
\newcommand{\cB}{\mathcal{B}}
\newcommand{\cC}{\mathcal{C}}
\newcommand{\cD}{\mathcal{D}}
\newcommand{\cE}{\mathcal{E}}
\newcommand{\cH}{\mathcal{H}}
\newcommand{\cK}{\mathcal{K}}
\newcommand{\cL}{\mathcal{L}}
\newcommand{\cM}{\mathfrak{M}}
\newcommand{\cO}{\mathcal{O}}
\newcommand{\cS}{\mathcal{S}}
\let\pd\partial
\newcommand{\bN}{\mathbb{N}}
\newcommand{\bR}{\mathbb{R}}
\DeclareMathOperator{\id}{id}
\DeclareMathOperator{\sign}{sign}
\newcommand{\coleq}{\coloneqq}
\date{\today}
\begin{document}

\title{Convolvability and regularization of distributions}

\author{C. Bargetz\footnote{Institut für Mathematik, Universität Innsbruck, Technikerstraße 13, 6020 Innsbruck, Austria, e-mail: christian.bargetz@uibk.ac.at}, E.~A. Nigsch\footnote{Institut für Mathematik, Universität Wien, Oskar-Morgenstern-Platz 1, 1090 Wien, Austria, e-mail: eduard.nigsch@univie.ac.at}, N. Ortner\footnote{Institut für Mathematik, Universität Innsbruck, Technikerstraße 13, 6020 Innsbruck, Austria}}

\maketitle

\begin{abstract}
We apply L.~Schwartz' theory of vector valued distributions in order to simplify, unify and generalize statements about convolvability of distributions, their regularization properties and topological properties of sets of distributions. The proofs rely on propositions on the multiplication of vector-valued distributions and on the characterization of the spaces $\mathcal{O}_{M}(E,F)$ and $\mathcal{O}_{C}'(E,F)$ of multipliers and convolutors for distribution spaces $E$ and $F$.

\end{abstract}

%
%
%



\section{Introduction}

This work has three objectives:

\begin{enumerate}[(a)]
 \item \label{aim1} The generalization of statements about the equivalence of various definitions of \emph{convolvability of distributions}.
 \item \label{aim2} The unification and simplification of \emph{regularization properties}.
 \item \label{aim3} The simplification of proofs of characterizations of \emph{topological properties} (boundedness and relative compactness) of sets of distributions and kernels.
\end{enumerate}

The simplifications alluded to in (\ref{aim2}) and (\ref{aim3}) consist in the use of the theory of vector valued distributions as presented in \cite{mixed,zbMATH03145499,zbMATH03145498}.
This way, one can avoid the use of parametrices on which previous proofs rest (cf., e.g., \cite[p.~202]{TD} and \cite[p.~544]{zbMATH03163214}). The reformulation of classical statements referred to in (\ref{aim1}) and (\ref{aim2}) is done by means of de Wilde's closed graph theorem.

Let us now describe these aims and their background in more detail.

\textbf{ad (\ref{aim1}): convolvability.} Textbooks on distribution theory mostly mention only the following convolution mappings (for distribution spaces on $\bR^n$):
\begin{itemize}
 \item $\cE' \times \cD' \xrightarrow{*} \cD'$, well-defined by condition~($\Sigma$), see~\cite[p.~383]{zbMATH03230708},
 \item $\cD'_{+\Gamma} \times \cD'_{+\Gamma} \xrightarrow{*} \cD'_{+\Gamma}$, well-defined by a cone-condition (C),
 \item $\cO_C' \times \cS' \xrightarrow{*} \cS'$, well defined by a growth condition (D).
\end{itemize}
While $(\Sigma)$ and (C) impose conditions on the supports, (D) restricts the growth of the distributions to be convolved. Already very basic examples in $\bR^2$ show that these mappings are not sufficient to treat certain problems by convolution. Indeed, choose for $\Gamma$ the forward light cone 
$\Gamma = \{\, (t,x) \in \bR^2\ |\ t \ge \abso{x}\,\}$
and consider the fundamental solutions $Y(t) \delta(t+x)$ of $\pd_t - \pd_x$ and $Y(t)\delta(t-x)$ of $\pd_t + \pd_x$ (where $Y$ is the Heaviside function). Convolving them by (C) then gives the uniquely determined fundamental solution $\frac{1}{2} Y(t - \abso{x})$ of $\pd_t^2 - \pd_x^2$ with support in $\Gamma$. However, if we choose instead the two-sided fundamental solutions $\frac{1}{2} \sign(t) \delta(t+x) = (Y(t) - 1/2) \delta(t+x)$ of $\pd_t - \pd_x$ and
 $\frac{1}{2} \sign(t)\delta(t-x) = (Y(t) - 1/2) \delta(t-x)$ of $\pd_t + \pd_x$,
which both have support in $\Gamma \cup (-\Gamma)$, these are convolvable by neither of the conditions ($\Sigma$), (C) and (D).

For this reason, more general definitions of convolution and convolvability were given already 1954 in \cite[exp.~22]{semschwartz} and 1974 in \cite{zbMATH03469533}: two distributions $S,T \in \cD'(\bR^n)$ are called \emph{convolvable} if
\begin{enumerate}
 \item[(SH)] $\forall \varphi \in \cD$: $\varphi(x+y) S(x)T(y) \in \cD'_{L^1}(\bR^{2n}_{xy})$, or
 \item[(S)] $\forall \varphi \in \cD$: $(\varphi * \check S)T \in \cD'_{L^1}(\bR^n)$.
\end{enumerate}
The equivalence of (S) to partial summability in $y$ of the kernel $S(x-y)T(y)$, i.e., $S(x-y) T(y) \in \cD'_x \widehat\otimes \cD'_{L^1,y}$, was established in \cite[p.~132]{zbMATH03145498}. The implication (SH) $\Rightarrow$ (S) was shown in \cite{semschwartz} using the theorem of Fubini for kernels in $\cD'_{L^1,xy}$.
The converse (S) $\Rightarrow$ (SH) was shown in 1959 by R.~Shiraishi (\cite{zbMATH03149320}) and independently in 1976 by B.~Roider (\cite{zbMATH03527321}).

Following Y.~Hirata and R.~Shiraishi (\cite{zbMATH03163214}) one can also formulate condition (SH) without test functions as in
\[ \delta(z-x-y) S(x)T(y) \in \cD'_z \widehat\otimes \cD'_{L^1,xy} = \cL_b(\cD_z, \cD'_{L^1,xy}), \]
where the index $b$ refers to the topology of uniform convergence on bounded subsets of $\mathcal{D}_z$.
Because $\cD'_{L^1, xy}$ is invariant under linear nondegenerate coordinate transformations
the equivalence (S) $\Leftrightarrow$ (SH) can then be written as
\[ S(x-y)T(y) \in \cD'_x \widehat\otimes \cD'_{L^1, y} \Longleftrightarrow \delta(z-x) S(x-y) T(y) \in \cD'_z \widehat\otimes \cD'_{L^1,xy}. \]
Replacing the special kernels $S(x-y)T(y)$ by kernels $K(x,y) \in \cD'_{xy}$, we obtain the general formulation
\[ K(x,y) \in \cD'_x \widehat \otimes \cD'_{L^1,y} \Longleftrightarrow \delta(z-x) K(x,y) \in \cD'_z \widehat\otimes \cD'_{L^1,xy}. \]
Section \ref{sec_kernels} is dedicated to the generalization of statements of this kind.

\textbf{ad (\ref{aim2}): regularization.} A classical regularization statement for $S \in \cD'(\bR^n)$ is the equivalence $S \in \cD'_{L^1} \Leftrightarrow \forall \varphi \in \cD: \varphi * S \in \cD_{L^1}$, which by means of the closed graph theorem can be written in a test function free way as
\[ S \in \cD'_{L^1} \Leftrightarrow S(x-y) \in \cD'_y \widehat\otimes \cD_{L^1,x}. \]
A generalization of this statement to $E$-valued kernels $K(x,y) \in \cD'_y ( E_x)$ is
\[ K(x,y) \in \cD'_{L^1,y}(E_x) \Longleftrightarrow K(x, y-z) \in \cD'_z \widehat\otimes \cD_{L^1,y} (E_x).
\]
Applying this equivalence to $K(x,y) = S(x-y) T(y)$ for $S,T \in \cD'(\bR^n)$ by setting $E = \cD'$,
one obtains
\begin{align*}
S(x-y)T(y) \in \cD'_x \widehat\otimes \cD_{L^1,y} & \Longleftrightarrow S(x-y+z)T(y-z) \in \cD'_z \widehat\otimes \cD'_x \widehat\otimes \cD_{L^1,y} \\
& \hphantom { \Longleftrightarrow S(x-y+z)T(y-z)\,} = \cD'_{xz} \widehat\otimes \cD_{L^1,y} \\
& \Longleftrightarrow S(\xi - y)T(y-\eta) \in \cD'_{\xi\eta} \widehat\otimes \cD_{L^1,y}
\end{align*}
where we applied the kernel theorem, i.e., $\mathcal{D}_{z}'\widehat{\otimes}\mathcal{D}'_{x} = \mathcal{D}'_{zx}$, in the first line and the linear change of coordinates $x + z = \xi, z = \eta$ for the second equivalence. This proves (again by the closed graph theorem) the equivalence of (S) to Chevalley's following convolvability condition given in 1951:
\[ \forall \varphi, \psi \in \cD: (\varphi * \check S)(\psi * T) \in L^1. \]
This equivalence was first shown in 1959 by R.~Shiraishi (\cite{zbMATH03149320}). Section \ref{sec_reg} is concerned with generalizations of such regularization properties.

Our notation and terminology follows \cite{mixed,zbMATH03145499,zbMATH03145498,TD}. However, instead of $K(\hat x, \hat y)$ (as in \cite{zbMATH03145499,zbMATH03145498,zbMATH03163214}) we write $K(x,y)$ for kernels $K \in \cD'(\bR^{2n}_{xy})$. 

The symbol ``$\hookrightarrow$'' denotes continuous injection.

An early version of this article is based on a talk given by the third author at the conference on Generalized Functions, Vienna, September 2009.

\section{Generalization of the equivalence of definitions of convolvability}\label{sec_kernels}

\begin{proposition}\label{prop_eins}
Let $\cH$ and $\cL$ be spaces of distributions such that $\cH$ has topology $\gamma$, $\cH$ and $\cH'_c=\cH_b'$ are nuclear, normal spaces of distributions, $\cH_z \widehat\otimes \cH_x \cong \cH_{zx}$, $\cH$ is invariant under linear nondegenerate coordinate transformations, and there exists a hypocontinuous multiplication mapping $\cH \times \cH_c' \to \cL$. (Note that the subscript $c$ refers to the topology of uniform convergence on absolutely convex compact subsets of $\cH$).

Moreover, suppose that $E$ is a normal space of distributions such that
\begin{equation}\label{propeins_cond} 
\cL_x \bine E_y \hookrightarrow E_{xy} \hookrightarrow \cH_x \bine E_y.
\end{equation}
Then for $K \in \cD'_{xy}$ we have:
\[ K(x,y) \in \cH_x \widehat\otimes E_y \Leftrightarrow \delta(z-x) K(x,y) \in \cH_z \widehat\otimes E_{xy}. \]
\end{proposition}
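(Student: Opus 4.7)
My plan is to pass through the $\bine$-product formulation, which is legitimate because $\cH$ is nuclear and hence $\cH_x \widehat\otimes E_y = \cH_x \bine E_y$ and $\cH_z \widehat\otimes E_{xy} = \cH_z \bine E_{xy}$. Under these identifications, an element of $\cH_x \widehat\otimes E_y$ is a continuous linear map $\tilde K \colon E'_{y,c} \to \cH_x$. The two directions then become: $(\Rightarrow)$ realising the operation ``multiplication by $\delta(z-x)$'' as the continuous map $\cH'_{z,c} \ni S \mapsto S(x) K(x,y) \in E_{xy}$; and $(\Leftarrow)$ inverting this operation by a linear change of coordinates followed by evaluation against a fixed bump in the new $z$-variable.

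For $(\Rightarrow)$, given $K \in \cH_x \widehat\otimes E_y$ with associated $\tilde K$, I relabel $z \mapsto x$ and define
\[
\Phi(S) \colon E'_{y,c} \xrightarrow{\tilde K} \cH_x \xrightarrow{\,S\,\cdot\,} \cL_x
\]
using the hypocontinuous multiplication $\cH_x \times \cH'_{x,c} \to \cL_x$. For each equicontinuous $A \subset E'_{y,c}$ the image $\tilde K(A)$ is bounded in $\cH_x$, and hypocontinuity applied to this bounded set yields the continuity of $S \mapsto \Phi(S)$ as a map $\cH'_{z,c} \to \cL_x \bine E_y$. Composing with the hypothesised inclusion $\cL_x \bine E_y \hookrightarrow E_{xy}$ gives a continuous linear map $\cH'_{z,c} \to E_{xy}$, i.e.\ an element of $\cH_z \bine E_{xy} = \cH_z \widehat\otimes E_{xy}$. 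Testing against tensor products $\psi(z) \otimes \varphi(x,y)$ with $\psi \in \cD_z$, $\varphi \in \cD_{xy}$ identifies this element with the distribution $\delta(z-x) K(x,y)$.

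For $(\Leftarrow)$, starting from $\delta(z-x) K(x,y) \in \cH_z \widehat\otimes E_{xy}$, I insert the inclusion $E_{xy} \hookrightarrow \cH_x \widehat\otimes E_y$ and use the hypothesis $\cH_z \widehat\otimes \cH_x \cong \cH_{zx}$ to obtain
\[
\delta(z-x) K(x,y) \in \cH_z \widehat\otimes \cH_x \widehat\otimes E_y = \cH_{zx} \widehat\otimes E_y.
\]
The invariance of $\cH$ under the nondegenerate linear change $(z,x) \mapsto (z',x') := (z-x, x)$ produces a topological isomorphism $\cH_{zx} \widehat\otimes E_y \cong \cH_{z'x'} \widehat\otimes E_y$ transporting the element above to $\delta(z') \otimes K(x', y) \in \cH_{z'} \widehat\otimes (\cH_{x'} \widehat\otimes E_y)$. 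Since $\cH$ is a normal space of distributions we have $\cD \hookrightarrow \cH'$; I pick $\phi \in \cD_{z'}$ with $\phi(0) = 1$ and apply the continuous pairing with $\phi$ in the $z'$-slot. The image of $\delta(z') \otimes K(x', y)$ under this pairing is $\phi(0)\, K(x', y) = K(x,y)$, proving $K \in \cH_x \widehat\otimes E_y$.

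The main technical hurdle is the forward direction, specifically the careful lifting of the hypocontinuous multiplication $\cH \times \cH'_c \to \cL$ to the $\bine$-product level and the verification that the abstractly constructed map $\Phi$ really represents the distribution $\delta(z-x) K(x,y)$ rather than some other extension. The converse is essentially bookkeeping with nuclear tensor products, combined with the coordinate invariance and normality of $\cH$.
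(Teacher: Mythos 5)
Your proof is correct and follows essentially the same route as the paper: the forward direction is multiplication of $K(x,y)$ by $\delta(z-x)$ together with the embedding $\cL_x \bine E_y \hookrightarrow E_{xy}$, and the converse uses $E_{xy} \hookrightarrow \cH_x \bine E_y$, the isomorphism $\cH_z \widehat\otimes \cH_x \cong \cH_{zx}$, the coordinate change $(z,x)\mapsto(z-x,x)$ and evaluation in the new variable. The only difference is presentational: where the paper invokes Schwartz's vector-valued multiplication result (Prop.~25, Chap.~II) applied to the kernel $\delta(z-x)$ of the identity, you construct the corresponding map by hand from the hypocontinuity of $\cH \times \cH'_c \to \cL$ and then identify it with $\delta(z-x)K(x,y)$ by testing on $\cD_z \otimes \cD_{xy}$.
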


\begin{remark}
  Note that the assumption $\cH_z \widehat\otimes \cH_x \cong \cH_{zx}$ implies the completeness of $\cH$ and that the space $\cH$ has the topology $\gamma$ iff $(\cH'_c)'_c=\cH$. For a detailed description of this topology, we refer to~\cite[p.~17]{zbMATH03145498}
\end{remark}

\begin{proof}
 ``$\Rightarrow$'': Using our assumptions, by \cite[Prop.~25, p.~120]{zbMATH03145499} there exists a multiplication mapping $\cH'_{c,x} (\cH_z) \times \cH_x (E_y) \to \cL_x ( E_y \widehat\otimes_\pi \cH_z)$. Because $\id \in \cL_\e(\cH, \cH)$ has kernel $\delta(z-x) \in \cH'_{c,x}(\cH_z)$ we obtain
 \[ K(x,y) \cdot \delta(z-x) \in \cL_x \bine (E_y \widehat\otimes \cH_z) = \cL_x \bine E_y \bine \cH_z \hookrightarrow E_{xy} \bine \cH_z = E_{xy} \widehat\otimes \cH_z. \]
 
 Conversely, we have
 \[ \delta(z-x) K(x,y) \in \cH_z \bine E_{xy} \hookrightarrow \cH_z \bine \cH_x \bine E_y = (\cH_z \widehat\otimes \cH_x) \bine E_y = \cH_{zx} \bine E_y. \]
 After applying the linear coordinate transformation $z-x = \xi$, $x=\eta$, we obtain $\delta(\xi)K(\eta,y) \in \cH_{\xi\eta} \widehat\otimes E_y \cong \cH_\xi \widehat\otimes \cH_\eta \widehat\otimes E_y$ 
 which results in $K(x,y) \in \cH_x \widehat\otimes E_y$.
\end{proof}

\begin{remark}
The last part of the proof shows that in case the equivalence holds, $\delta$ necessarily is an element of $\cH$.
\end{remark}

The first part of requirement \eqref{propeins_cond} is obtained in many cases from the following Lemma.
\begin{lemma}\label{lemma_eins}
 Let $\cL \overset{\iota_1}{\hookrightarrow} E \overset{\iota_2}{\hookrightarrow} \cH$, $\cL$ nuclear, $E$ complete, $E_x\times E_y\overset{\otimes}{\to} E_{xy}$ continuous, and assume that $\cH_z \widehat\otimes_\e \cH_x \cong \cH_{zx}$. Then $\cL_x \widehat\otimes E_y \hookrightarrow E_{xy}$.
\end{lemma}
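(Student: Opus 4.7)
The plan is to construct the canonical continuous linear map $T : \cL_x \widehat\otimes E_y \to E_{xy}$ from the bilinear structure on $E$, and then to establish injectivity by comparing with the ambient equality $\cH_x \widehat\otimes_\e \cH_y = \cH_{xy}$.

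First I would precompose the assumed continuous bilinear map $E_x \times E_y \to E_{xy}$ with $\iota_1 : \cL \hookrightarrow E$ to obtain a continuous bilinear map $\cL_x \times E_y \to E_{xy}$. Assuming the natural completeness of $E_{xy}$ (which in the distributional setting follows in the same way as for $\cH$, whose completeness is implicit in $\cH_z \widehat\otimes_\e \cH_x \cong \cH_{zx}$), the universal property of the projective tensor product produces a continuous linear $T : \cL_x \widehat\otimes_\pi E_y \to E_{xy}$. Since $\cL$ is nuclear, $\cL_x \widehat\otimes_\pi E_y = \cL_x \widehat\otimes_\e E_y = \cL_x \widehat\otimes E_y$, giving the desired continuous map.

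For injectivity, I would set up the commutative diagram
\[
\begin{array}{ccc}
\cL_x \widehat\otimes E_y & \xrightarrow{T} & E_{xy} \\
\downarrow & & \downarrow \\
\cH_x \widehat\otimes_\e \cH_y & = & \cH_{xy}
\end{array}
\]
whose bottom equality is the hypothesis with variables renamed. The right vertical arrow is the inclusion $E_{xy} \hookrightarrow \cH_{xy}$ inherited from $\iota_2 : E \hookrightarrow \cH$ in the normal-distribution framework, while the left vertical arrow is given by $\widehat\otimes_\e$-functoriality applied to $\iota_2 \circ \iota_1$ and $\iota_2$. Commutativity is checked on elementary tensors $u \otimes v$: both paths send them to the element $u \otimes v$ of $\cH_{xy}$, and equality extends by density and continuity. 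Hence injectivity of $T$ follows once the left vertical arrow is known to be injective.

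The main obstacle is precisely this last injectivity. In general the $\e$-tensor product of continuous injections need not be injective, but nuclearity of $\cL$ saves the argument: tensoring with a nuclear space preserves topological injections in the $\e$-product, so the factorisation $\cL_x \widehat\otimes_\e E_y \hookrightarrow \cL_x \widehat\otimes_\e \cH_y \hookrightarrow \cH_x \widehat\otimes_\e \cH_y$ through the intermediate space consists of topological injections. Composing these yields the required injection, and the proof is complete.
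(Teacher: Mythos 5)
Your proof is correct and takes essentially the same route as the paper: the same commutative square comparing the canonical map $\cL_x \widehat\otimes E_y \to E_{xy}$ with $(\iota_2\circ\iota_1)\widehat\otimes_\e \iota_2$ into $\cH_x\widehat\otimes_\e\cH_y \cong \cH_{xy}$, reducing injectivity of the former to injectivity of the latter. Your added justification of that last injectivity via nuclearity of $\cL$ is precisely what the paper leaves implicit; just note that the $\iota$'s are only continuous injections, not topological embeddings, so the clean formulation is through the $\varepsilon$-product representation of $\cL\widehat\otimes_\e(\cdot)$ for the complete nuclear space $\cL$, rather than ``preservation of topological injections''.
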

\begin{proof}
 Consider the following diagram.
 \[
  \xymatrix{
  \cL_x \widehat\otimes E_y \ar[r]^{\iota_1 \widehat\otimes_\pi \id_E} \ar[d]_{(\iota_2 \circ \iota_1)\widehat\otimes_\e \iota_2} & E_x \widehat\otimes_\pi E_y \ar[r]^{\widehat \otimes } & E_{xy} \ar[d]^{\iota_2} \\
  \cH_x \widehat\otimes_\e \cH_y \ar[rr]_{{\rm id}} & & \cH_{xy}
  }
 \]
The map in question is $\widehat \otimes \circ (\iota_1 \widehat\otimes_\pi \id_E)$, which is injective if and only if its composition with $\iota_2$. is. By commutativity of the diagram, this is equivalent to injectivity of ${\rm id} \circ ((\iota_2 \circ \iota_1) \widehat\otimes_\e \iota_2)$, which we have by assumption.
\end{proof}

Stated with (1) $\cL = \cE'$, $\cH = \cD'$ or (2) $\cL = \cO_C'$, $\cH = \cS'$, Proposition \ref{prop_eins} reads as follows:
\begin{corollary}
 Suppose $E$ is a complete, normal space of distributions such that $E_x \times E_y \overset{\otimes}{\to} E_{xy}$ is continuous, and $K \in \cD'_{xy}$.
 \begin{enumerate}[(1)]
  \item If $\cE' \hookrightarrow E$, $E_{xy} \hookrightarrow \cD'_x \widehat\otimes E_y$, the following equivalence holds:
  \[ K(x,y) \in \cD'_x \widehat\otimes E_y \Longleftrightarrow \delta(z-x) K(x,y) \in \cD'_z \widehat\otimes E_{xy}. \]
  \item If $\cO_C' \hookrightarrow E \hookrightarrow \cS'$, $E_{xy} \hookrightarrow \cS_x' \widehat\otimes E_y$, the following equivalence holds:
  \[ K(x,y) \in \cS'_x \widehat\otimes E_y \Longleftrightarrow \delta(z-x) K(x,y) \in \cS'_z \widehat\otimes E_{xy}. \]
 \end{enumerate}
\end{corollary}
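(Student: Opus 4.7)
The plan is to derive both assertions as immediate specializations of Proposition~\ref{prop_eins} together with Lemma~\ref{lemma_eins}: in part~(1) I take $(\cH,\cL)=(\cD',\cE')$, and in part~(2) I take $(\cH,\cL)=(\cS',\cO_C')$. For each choice I only need to verify the abstract hypotheses on $(\cH,\cL)$ and to produce the two embeddings in~\eqref{propeins_cond}, the left one via Lemma~\ref{lemma_eins} and the right one directly from the hypothesis of the corollary.

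For case~(1), $\cH=\cD'$ is a nuclear, normal space of distributions carrying the topology $\gamma$ (on $\cD'$ this coincides with the strong topology), its strong dual $\cD$ is likewise nuclear and normal, the kernel theorem supplies $\cD'_z\widehat\otimes\cD'_x\cong\cD'_{zx}$, and $\cD'$ is invariant under linear nondegenerate coordinate transformations. The hypocontinuous multiplication $\cD'\times\cD\to\cE'$ required by Proposition~\ref{prop_eins} is the elementary observation that a distribution times a compactly supported test function has compact support. Nuclearity of $\cE'$ makes the $\bine$- and $\widehat\otimes$-tensor products coincide at the relevant places, so the right half of \eqref{propeins_cond}, $E_{xy}\hookrightarrow\cD'_x\widehat\otimes E_y$, is precisely the explicit hypothesis of the corollary. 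For the left half $\cE'_x\widehat\otimes E_y\hookrightarrow E_{xy}$ I would invoke Lemma~\ref{lemma_eins} with $\cL=\cE'$, $\cH=\cD'$: the chain $\cE'\hookrightarrow E\hookrightarrow\cD'$ is available (the first inclusion is a hypothesis, the second follows from normality of $E$), $\cE'$ is nuclear, $E$ is complete and tensor-continuous by assumption, and $\cD'_z\widehat\otimes_\e\cD'_x\cong\cD'_{zx}$ is again the kernel theorem. Proposition~\ref{prop_eins} then delivers the equivalence in~(1).

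Case~(2) proceeds in parallel with $(\cH,\cL)=(\cS',\cO_C')$: $\cS'$ is nuclear, normal, carries $\gamma$, is invariant under linear coordinate changes, has nuclear normal strong dual $\cS$, and satisfies the kernel theorem $\cS'_z\widehat\otimes\cS'_x\cong\cS'_{zx}$. In place of $\cD'\times\cD\to\cE'$ one invokes Schwartz' classical theorem that the multiplication $\cS'\times\cS\to\cO_C'$ is hypocontinuous. Lemma~\ref{lemma_eins} with $(\cL,\cH)=(\cO_C',\cS')$ supplies $\cO'_{C,x}\widehat\otimes E_y\hookrightarrow E_{xy}$ from the assumption $\cO_C'\hookrightarrow E\hookrightarrow\cS'$, and Proposition~\ref{prop_eins} finishes the argument.

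With Proposition~\ref{prop_eins} and Lemma~\ref{lemma_eins} in place the proof is essentially bookkeeping, and the one non-self-evident structural ingredient that I would flag as needing an explicit reference is the hypocontinuity of $\cS'\times\cS\to\cO_C'$ used in case~(2); once this is cited, everything else is standard functional-analytic properties of the classical Schwartz spaces $\cD,\cD',\cE',\cS,\cS',\cO_C'$.
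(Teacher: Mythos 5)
Your proposal is correct and is exactly the paper's route: the corollary is simply Proposition~\ref{prop_eins} specialized to $(\cH,\cL)=(\cD',\cE')$ resp.\ $(\cS',\cO_C')$, with the left embedding in \eqref{propeins_cond} supplied by Lemma~\ref{lemma_eins} and the right one taken as hypothesis. Your verification of the hypotheses on $\cH$ (topology $\gamma$, nuclearity, kernel theorem, invariance, hypocontinuous multiplication $\cD'\times\cD\to\cE'$ and $\cS'\times\cS\to\cO_C'$) matches what the paper leaves implicit.
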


\begin{remark}
 We give some remarks concerning the assumptions of Proposition \ref{prop_eins}.
\begin{enumerate}[(a)]
 \item For $E = \dot\cB$, $\cC_0$ and $\dot\cB'$ the assumption $E_{xy} \hookrightarrow \cS'_x \widehat\otimes E_y$ is satisfied because $\dot\cB_{xy} = \dot\cB_x \widehat\otimes_\e \dot\cB_y$ (\cite[Prop.~28, p.~98 and Prop.~17, p.59]{zbMATH03145498}), $\cC_{0,xy} = \cC_{0,x} \widehat\otimes_\e \cC_{0,y}$ (\cite[Chapitre I, p.~90]{zbMATH03199982}) and $\dot\cB_{xy}' = \dot\cB'_x \widehat\otimes_\e \dot\cB'_y$ which can be shown using~\cite[Theorem 3]{bargetz}.
 \item With (1) $\cH = \cD'$ or (2) $\cH = \cS'$, let $F$ be a space of distributions such that $\cH' \hookrightarrow F$ and $\cH'$ is dense in $F$. Setting $E = F'$ one easily sees that $E_{xy} \hookrightarrow \cH_x(E_y)$.
Hence, we obtain the embedding
 \[ \cD'_{L^p, xy} \hookrightarrow \cS'_x \widehat\otimes \cD'_{L^p, y} \]
 with $E = \cD'_{L^p}$, $1 \le p \le \infty$ and $F = \dot\cB$ for $p=1$ and $F = \cD_{L^q}$ with $1/p + 1/q = 1$ for $p>1$. Note however that $\cD'_{L^\infty}$ is not normal.
\end{enumerate}
\end{remark}

Considering the first part of the proof of Proposition \ref{prop_eins}, we remark that if $\cH = \cD'$ or $\cH = \cS'$, multiplication of $\delta(z-x)$ with $K(x,y)$ gives
\[
 K(x,y) \in \left\{ \begin{aligned}
             &\cD'_x(E_y) \\
             &\cS'_x(E_y) \\
            \end{aligned} \right.
\quad \Longrightarrow \quad
\left\{
\begin{aligned}
&\delta(z-x) K(x,y) \in \cD'_z \widehat \otimes \cE'_x(E_y) \\
&\delta(z-x) K(x,y) \in \cS'_z \widehat\otimes \cO'_{C,x}(E_y).
\end{aligned}
\right.
\]
Using vector valued integration with respect to $x$, these statements are in fact equivalent. As we will see from the next two propositions this equivalence can also be formulated as
\begin{equation}\label{MV}
\left\{
\begin{aligned}
 \cD'_x(E_y) &= \cO_M(\cD, \cE'_x(E_y)) \\
 \cS'_x(E_y) &= \cO_M(\cS, \cO'_{C,x}(E_y))
\end{aligned}
\right.
\end{equation}
if $\cO_M(\cK, \cL)$ is defined as follows (\cite[p.~69]{mixed} and \cite[Definition (1.1), p.~306]{zbMATH03875981}):
\begin{definition}
Let $\cK$ and $\cL$ be spaces of distributions, $\cK$ normal. Then we set
 \[ \cO_M(\cK, \cL) \coleq \{\, S \in \cD': \exists [S] \in \cL(\cK, \cL) \textrm{ such that }[S](\varphi) = \varphi \cdot S\ \forall \varphi \in \cD\,\}. \]
\end{definition}

It is well known that $\cO_M(\cS, \cS)=\cO_M(\cS',\cS')=\cO_M$, see~\cite[p.~246]{TD}, \cite[(3.12)~Ex., p.~314]{zbMATH03875981} or~\cite[Thm.~5.15, p.~86]{Petersen1983}.

Recognizing that for $S \in \cD'$ the map $\varphi \mapsto \varphi \cdot S$ in $\cL(\cD, \cD')$ has kernel $\delta(x-y)S(y) \in \cD'_{xy}$, we see:
\begin{proposition}\label{normspaces}Let $\cK$ and $\cL$ be spaces of distributions such that $\cK$ is normal and carries the topology $\gamma$. Then
\[ \cO_M(\cK, \cL) = \{ S \in \cD': \delta(x-y)S(y) \in \cK'_{c,x} \bine \cL_y \}. \]
\end{proposition}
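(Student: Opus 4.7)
The strategy is to reformulate both sides of the claimed equality as statements about continuous linear operators $\cK \to \cL$, after which the proposition becomes essentially a translation.

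Since $\cK$ carries the topology $\gamma$, the preceding remark gives the reflexivity-type identity $(\cK'_c)'_c = \cK$. Combined with the general identification $A \bine B \cong \cL_\e(A'_c, B)$ for the Schwartz $\e$-product of suitable locally convex spaces, this yields a natural isomorphism
\[
  \cK'_{c,x} \bine \cL_y \;\cong\; \cL_\e(\cK, \cL),
\]
under which a kernel $K(x,y)$ on the left corresponds to the continuous linear operator $T_K$ it defines on the right. The sentence immediately preceding the proposition already records the key computation: for $S \in \cD'$ the multiplication map $\varphi \mapsto \varphi \cdot S$ in $\cL(\cD, \cD')$ has kernel $\delta(x-y) S(y)$, since for $\varphi, \psi \in \cD$,
\[
  \langle \delta(x-y)S(y),\, \varphi(x)\psi(y) \rangle = \langle S,\, \varphi\psi \rangle = \langle \varphi \cdot S,\, \psi \rangle.
\]

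With these two observations in hand, both inclusions become formal. For ``$\subseteq$'', if $S \in \cO_M(\cK,\cL)$ then by definition there exists $[S] \in \cL(\cK,\cL)$ extending $\varphi \mapsto \varphi\cdot S$; its associated kernel $K_S \in \cK'_{c,x} \bine \cL_y$ agrees with $\delta(x-y)S(y)$ when tested against $\varphi(x)\psi(y)$ with $\varphi,\psi \in \cD$, and density of $\cD$ in $\cK$ (normality) together with the density of $\cD \otimes \cD$ in the appropriate projective/injective tensor product forces $K_S = \delta(x-y)S(y)$. For ``$\supseteq$'', if $\delta(x-y)S(y)$ already belongs to $\cK'_{c,x} \bine \cL_y$, then the operator $T \in \cL_\e(\cK,\cL)$ it represents satisfies $T(\varphi) = \varphi \cdot S$ for every $\varphi \in \cD$, by the same kernel computation; setting $[S] := T$ exhibits $S$ as an element of $\cO_M(\cK,\cL)$.

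The only delicate point is to make the isomorphism $\cK'_{c,x}\bine \cL_y \cong \cL_\e(\cK,\cL)$ precise, which I expect to be the main obstacle; it rests exclusively on the hypothesis $(\cK'_c)'_c = \cK$ (that is, the topology $\gamma$) together with standard representation theorems for the $\e$-product, and on the normality of $\cK$ to ensure that a kernel is determined by its action on $\cD_x \otimes \cD_y$.
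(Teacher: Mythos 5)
Your proof is correct and follows essentially the same route as the paper: the paper's entire proof consists of citing \cite[Prop.~13, p.~52]{zbMATH03145498} for precisely the identification $\cK'_{c}\bine\cL \cong \cL_c(\cK,\cL)$ that you single out as the delicate point (valid because $\cK$ carries the topology $\gamma$, i.e.\ $(\cK'_c)'_c=\cK$), combined with the observation, recorded in the sentence preceding the proposition, that $\varphi\mapsto\varphi\cdot S$ has kernel $\delta(x-y)S(y)$. The density and translation arguments you spell out are exactly what that citation packages, so no further work is needed.
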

\begin{proof}
$\delta(x-y)S(y) \in \cL_c(\cK, \cL) = \cK_c'(\cL)$ by \cite[Prop.~13, p.~52]{zbMATH03145498}.
\end{proof}
Because $\cD' = \cO_M(\cD, \cE')$ and $\cS' = \cO_M ( \cS, \cO_C')$ we can obtain the vector valued multiplier statements \eqref{MV} from the next proposition.
\begin{proposition}Let $\cK, \cL, \cM, E$ be normal spaces of distributions. Then $\cM = \cO_M(\cK, \cL)$ implies that $\cM(E) = \cO_M ( \cK, \cL(E))$.
\end{proposition}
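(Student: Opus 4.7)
The plan is to derive the equality by applying Proposition \ref{normspaces} on both sides and invoking associativity of the completed $\e$-tensor product.

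First, from the hypothesis $\cM = \cO_M(\cK,\cL)$ together with Proposition \ref{normspaces}, I would use the kernel characterization
\[
S \in \cM \iff \delta(x-y)S(y) \in \cK'_{c,x} \bine \cL_y,
\]
which realizes $\cM$ as a subspace of $\cK'_{c,x} \bine \cL_y$ via the injective map $S \mapsto \delta(x-y)S(y)$. Since $\cM$ is a normal space of distributions, its associated vector-valued distribution space satisfies $\cM(E) = \cM \bine E$, so tensoring the above identification with $E$ gives
\[
\cM(E) = \{\, T \in \cD'(E) : \delta(x-y)T(y) \in \cK'_{c,x} \bine \cL_y \bine E\,\}.
\]

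Next, I would observe that the proof of Proposition \ref{normspaces} generalizes verbatim to an $E$-valued target: a distribution $T \in \cD'(E)$ lies in $\cO_M(\cK,\cL(E))$ if and only if $\delta(x-y)T(y) \in \cK'_{c,x}\bine \cL(E)_y$. The only ingredient needed is the identification $\cL_c(\cK, F) = \cK'_c \bine F$ from \cite[Prop.~13, p.~52]{zbMATH03145498}, which applies to any complete locally convex target $F$, in particular to $F = \cL(E)$.

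To conclude, I would invoke $\cL(E) = \cL \bine E$ together with associativity of the $\e$-product,
\[
\cK'_{c,x} \bine \cL(E)_y \;=\; \cK'_{c,x} \bine \cL_y \bine E,
\]
so that the two kernel characterizations coincide, yielding $\cM(E) = \cO_M(\cK,\cL(E))$. The main obstacle I anticipate is precisely this last step: verifying that the two formations $(\cK'_{c,x} \bine \cL_y)\bine E$ and $\cK'_{c,x} \bine (\cL_y \bine E)$ agree topologically for the classes of normal distribution spaces at hand (without a blanket nuclearity assumption), which should ultimately rest on the $\e$-product formalism of \cite{zbMATH03145499,zbMATH03145498} and the fact that each factor is complete and carries a well-behaved topology.
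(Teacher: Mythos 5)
Your proposal is correct and follows essentially the same route as the paper: both sides are identified with the single condition $\delta(z-x)K(x,y)\in\cK'_{c,z}\bine\cL_x\bine E_y$ by means of Proposition~\ref{normspaces} and the $\e$-product formalism. The paper merely phrases your ``tensoring with $E$ plus associativity'' step as the equivalence of continuity of $T(y)\mapsto\langle K(x,y),T(y)\rangle$ from $E'_{c,y}$ into $\cM_x$ with continuity of $T(y)\mapsto\delta(z-x)\langle K(x,y),T(y)\rangle$ into $\cK'_{c,z}\bine\cL_x$, which is the same computation, and it leaves the topological identification $\cM=\cO_M(\cK,\cL)$ and the associativity of the $\e$-product just as implicit as you do.
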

\begin{proof}
$K \in \cM_x(E_y)$ means that the mapping
\[
 \begin{aligned}
  E'_{c,y} &\to \cM_x, \\
T(y) &\mapsto \langle K(x,y), T(y) \rangle
 \end{aligned}
\]
is linear and continuous. Because of $\cM = \cO_M(\cK, \cL)$ and Proposition \ref{normspaces} this is equivalent to continuity of
\[
\begin{aligned}
 E'_{c,y} &\to \cK'_{c,z} \bine \cL_x, \\
 T(y) &\mapsto \delta(z-x) \langle K(x,y), T(y) \rangle,
\end{aligned}
\]
i.e., $\delta(z-x) K(x,y) \in \cK'_{c,z} \bine \cL_x(E_y)$ or $K(x,y) \in \cO_M(\cK, \cL_x(E_y))$.
\end{proof}

\begin{remark}
Let us state some special cases of Proposition~\ref{prop_eins}.
\begin{enumerate}[(1)]
 \item For $\cH = \cD'$ and $E = \cD'_{L^1}$ (i.e., for partially summable kernels, cf.~\cite[p.~130]{zbMATH03145498}), Proposition \ref{prop_eins} is stated and proven in \cite[Prop.~2, pp.~539--540]{zbMATH03163214}. This proposition is a generalization of Theorem 2 in \cite[p.~24]{zbMATH03149320} from the particular kernels of the form $S(x-y)T(y)$, $S, T \in \cD'(\bR^n)$ to general kernels $K(x,y) \in \cD'(\bR^{2n}_{xy})$. In \cite[Theorem 2, p.~24]{zbMATH03149320}, the equivalences
 \begin{align*}
 S(x-y)T(y) \in \cD'_x \widehat\otimes \cD'_{L^1,y} & \Leftrightarrow \delta(z-x)S(x-y)T(y) \in \cD'_z \widehat\otimes \cD'_{L^1,xy}  \\
 & \Leftrightarrow \delta(z-x-y)S(x)T(y) \in \cD'_z \widehat\otimes \cD'_{L^1,xy} \\
 \intertext{(convolvability) and in \cite[Theorem 3, p.~26]{zbMATH03149320}, the equivalences}
 S(x-y)T(y) \in \cS'_x \widehat\otimes \cD'_{L^1,y} & \Leftrightarrow \delta(z-x)S(x-y)T(y) \in \cS'_z \widehat\otimes \cD'_{L^1, xy} \\
 & \Leftrightarrow \delta(z-x-y)S(x)T(y) \in \cS'_z \widehat\otimes \cD'_{L^1,xy}
 \end{align*}
($\cS'$-convolvability), i.e., Proposition \ref{prop_eins} for $\cH = \cD'$ or $\cS'$ and $E = \cD'_{L^1}$, are proven.

R.~Shiraishi's proof uses a parametrix of $\Delta_n$. The first proof without a parametrix is in \cite[p.~194]{zbMATH03527321}. A second proof and a generalization to $\cD'_{L^p}$ instead of $\cD'_{L^1}$ is given in \cite[Prop.~1.3.4, p.~13, and Remark 4, p.~16]{zbMATH06308371}.
\item Generalizations of the assertions in (1) to \emph{general kernels} are:
\begin{enumerate}
\item $K(x,y) \in \cD'_x \widehat\otimes \cD'_{L^p, y}$, $1 \le p \le \infty$ $\Leftrightarrow$ $\delta(z-x)K(x,y) \in \cD'_z \widehat\otimes \cD'_{L^p, xy}$ (\cite[Lemma 1, p.~321]{zbMATH05770245});
\item $K(x,y) \in \cD'_x \widehat\otimes \dot\cB'_y \Leftrightarrow \delta(z-x)K(x,y) \in \cD'_z \widehat\otimes \dot\cB'_{xy}$ (\cite[Cor.~Lemma 1, p.~324]{zbMATH05770245});
\item $K(x,y) \in \cS'_x \widehat\otimes \cD'_{L^1,y} \Leftrightarrow \delta(z-x) K(x,y) \in \cS'_z \widehat\otimes \cD'_{L^1,xy}$ (\cite[Lemma 2, p.~325]{zbMATH05770245}; \cite[p.~549, (2') $\Leftrightarrow$ (5')]{zbMATH03163214});
\item
\[
K(x,y) \in \cS'_x \widehat\otimes \left\{\begin{aligned}
&\cD'_{L^p,y} \\
&\dot\cB'_y
                                         \end{aligned} \right.
\Longleftrightarrow \delta(z-x) K(x,y) \in \cS'_z \widehat\otimes \left\{
\begin{aligned}
 &\cD'_{L^p,xy} \\
& \dot\cB'_{xy}
\end{aligned}\right.
\]
(\cite[Remark (2), Lemma 2, p.~326]{zbMATH05770245}).
\end{enumerate}
\item A characterization of \emph{semicompact kernels} is given in \cite[Proposition 30, p.~100]{zbMATH03145498}: If $K \in \cD'_{xy}$, then $K \in \cD'_x \widehat\otimes \cE'_y$ if and only if $\delta(z-x)K(x,y) \in \cD'_z \widehat\otimes \cE'_{xy}$.
\end{enumerate}

\end{remark}

\section{Regularization}\label{sec_reg}

For distributions $S \in \cD'(\bR^n)$ we find the following regularity statements in \cite{TD}:
\begin{enumerate}[(i)]
 \item $S \in \cE' \Leftrightarrow (\forall \varphi \in \cD: \varphi * S \in \cD)$ (Thm.~XI, p.~166);
 \item $S \in \cO_C' \Leftrightarrow (\forall \varphi \in \cD: \varphi * S \in \cS)$ (Thm.~IX, p. 244; Thm.~XI, p.~247);
 \item $S \in \cD'_{L^p} \Leftrightarrow (\forall \varphi \in \cD: \varphi * S \in \cD_{L^p})$ (Thm.~XXV, 2°, p. 201);
 \item[(v)] $S \in \cS' \Leftrightarrow (\forall \varphi \in \cD: \varphi * S \in \cO_C)$ (Thm~VI, 2°, p.~239; \cite[Prop.~7, p.~420]{zbMATH03230708}).
\end{enumerate}
As was shown in \cite[Lemma, p.~473]{zbMATH03951304} one can replace the spaces $\cD$, $\cS$, $\cD_{L^p}$ and $\cO_C$ on the right-hand side by the larger spaces $\cE'$, $\cO_C'$, $\cD'_{L^p}$ and $\cS'$, respectively, which means that the implications ``$\Leftarrow$'' also hold under the weaker regularity assumptions
\[ \forall \varphi \in \cD: \varphi * S \in \cE',\ \cO_C',\ \cD'_{L^p}\textrm{ and }\cS'\textrm{, respectively.} \]
By means of the closed graph theorem one can formulate these regularity statements without test functions also in terms of belonging to certain spaces:
\begin{enumerate}[(i)]
 \item $S(x-y) \in \cD'_y \widehat\otimes \cD_x$ and $S(x-y) \in \cD'_y \widehat\otimes \cE'_x$,
 \item $S(x-y) \in \cD'_y \widehat\otimes \cS_x$ and $S(x-y) \in \cD'_y \widehat\otimes \cO_{C,x}'$,
 \item $S(x-y) \in \cD'_y \widehat\otimes \cD_{L^p,x}$ and $S(x-y) \in \cD'_y \widehat\otimes \cD'_{L^p,x}$,
 \item[(v)] $S(x-y) \in \cD'_y \widehat\otimes \cO_{C,x}$ and $S(x-y) \in \cD'_y \widehat\otimes \cS'_x$.
\end{enumerate}
We furthermore have:
\begin{enumerate}
 \item[(iv)] $S \in \dot\cB' \Leftrightarrow (\forall \varphi \in \cD: \varphi * S \in \dot\cB)$
\end{enumerate}
or
\begin{enumerate}
 \item[(iv)] $S \in \dot\cB' \Leftrightarrow S(x-y) \in \cD'_y \widehat\otimes \dot\cB_x \Leftrightarrow S(x-y) \in \cD'_y \widehat\otimes \dot\cB'_x$.
\end{enumerate}
These well-known results can be summarized as follows:

\begin{proposition}\label{prop_vier}
 Let $E$ and $F$ be spaces of distributions from the same row of the following table:
\[
 \begin{aligned}
  (i)& \\
 (ii)& \\
(iii)&\\
 (iv)& \\
  (v)& \\
 \end{aligned} 
\qquad E = \left\{
 \begin{aligned}
& \cE' \\
& \cO_C' \\
& \cD'_{L^p} \quad (1 \le p < \infty) \\
& \dot\cB'  \\
& \cS' \\
 \end{aligned}
\right.
\qquad F = \left\{
 \begin{aligned}
& \cD \subseteq \cD^0 \subseteq \cE'  \\
& \cS \subseteq \cS^0 \subseteq \cO_C' \\
& \cD_{L^p} \subseteq L^p \subseteq \cD'_{L^p }\\
& \dot\cB \subseteq \cC_0 \subseteq \dot\cB' \\
& \cO_C \subseteq \cO_C^0 \subseteq \cS' \\
 \end{aligned}
\right.
\]
Then we have:
\begin{enumerate}[(1)]
 \item $ S \in E \Leftrightarrow S(x-y) \in \cD'_y \widehat\otimes F_x$;
 \item $ S \in E \Leftrightarrow S(x-y) \in \cS'_y \widehat\otimes F_x$.
\end{enumerate}
\end{proposition}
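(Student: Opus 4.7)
The plan is to reduce both equivalences to the classical regularization statements (i)--(v) recalled immediately above Proposition~\ref{prop_vier}. The bridge between the test-function-free formulation $S(x-y) \in \cD'_y \widehat\otimes F_x$ and the pointwise condition ``$\forall \varphi \in \cD \colon \varphi * S \in F$'' is furnished by the kernel theorem together with de~Wilde's closed graph theorem.

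First I would invoke the kernel theorem: by nuclearity of $\cD$ (respectively $\cS$) and completeness of each $F$ appearing in the table, $\cD'_y \widehat\otimes F_x$ is naturally identified with a space of continuous linear maps $\cD_y \to F_x$. Under this identification the translation-invariant kernel $S(x-y)$ corresponds to the convolution operator, since
\[ \langle S(x-y), \varphi(y) \rangle_y = (S * \varphi)(x). \]
Hence $S(x-y) \in \cD'_y \widehat\otimes F_x$ is equivalent to the continuity of $\varphi \mapsto S * \varphi$ as a map $\cD \to F$, and analogously in the tempered setting. Next I would apply de~Wilde's closed graph theorem, which is available here because $\cD$ and $\cS$ are ultrabornological and each $F$ in the table is webbed; it reduces this continuity to the merely pointwise statement $\varphi * S \in F$ for every $\varphi$ in the source.

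It remains to match these pointwise conditions with $S \in E$. For the leftmost column of the table this is literally (i)--(v). For the middle and rightmost columns the direction $\Rightarrow$ follows from the continuous embeddings $F_{\mathrm{left}} \hookrightarrow F_{\mathrm{mid}} \hookrightarrow F_{\mathrm{right}}$, while the converse direction is the extension recorded in \cite[Lemma, p.~473]{zbMATH03951304}. Assertion~(2) is handled in the same way once one notices that $E \hookrightarrow \cS'$ in every row, so that $\varphi * S$ is well defined for $\varphi \in \cS$ and $S \in E$. The main ``obstacle'' is really bookkeeping rather than a single hard step: one has to check in each row that $F$ is webbed and complete so that the closed-graph and kernel-theorem machinery applies, and to notice that the restriction $1 \le p < \infty$ in row~(iii) is forced by the failure of normality of $\cD'_{L^\infty}$ already mentioned in the excerpt.
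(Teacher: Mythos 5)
Your reduction is essentially the one the paper itself sketches in the paragraph preceding the proposition: identify $S(x-y)\in\cD'_y\widehat\otimes F_x$ with continuity of $\varphi\mapsto\varphi*S$ from $\cD$ (resp.\ $\cS$) into $F$ via the kernel theorem and de Wilde's closed graph theorem (you should still note why the graph is closed, namely because both sides inject continuously into $\cD'$), and then quote the classical regularization theorems (i)--(v) of \cite{TD} for the left column together with Wagner's lemma \cite{zbMATH03951304} for the enlarged spaces on the right. This does prove the statement, but for the direction ``$\Leftarrow$'' it is a genuinely different route from the paper's, and in a sense the opposite of its purpose: the proofs in \cite{TD} you import rest on a parametrix of $\Delta_n$, which is exactly what the authors want to avoid. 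The paper instead proves ``$\Leftarrow$'' directly and uniformly by multiplying the kernel $S(x-y)$ with $\delta(z-y)\in\cD'_z\widehat\otimes\cD_y$ (resp.\ $\cS'_z\widehat\otimes\cS_y$), applying Schwartz' multiplication theorem for vector-valued distributions \cite[Prop.~25, p.~120]{zbMATH03145499}, Lemma~\ref{lemma_eins}, and the invariance of $\cD'_{L^p,xy}$ (and its analogues) under the linear change of coordinates $x-y=\xi$, $y=\eta$. Your route buys brevity by citation; the paper's buys a parametrix-free argument whose mechanism carries over to the vector-valued and ``finer'' versions in Propositions~\ref{prop_sechs}--\ref{prop_acht}.

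There is also a genuine gap in your one-line treatment of assertion (2). The implication ``$\Leftarrow$'' of (2) does reduce to (1) by restricting to $\varphi\in\cD\subset\cS$, but ``$\Rightarrow$'' needs more than the remark that $\varphi*S$ is well defined for $\varphi\in\cS$: you need the convolution mapping results $E*\cS\subseteq F$ (for instance $\cO_C'*\cS\subseteq\cS$, $\cD'_{L^p}*\cS\subseteq\cD_{L^p}$, $\dot\cB'*\cS\subseteq\dot\cB$, $\cS'*\cS\subseteq\cO_C$), which are not among the statements (i)--(v) you cite, since those concern $\varphi\in\cD$ only. Worse, in row (i) no such result is available: under your own kernel-theorem identification, $\delta(x-y)\in\cS'_y\widehat\otimes\cE'_x$ would mean that $\varphi\mapsto\varphi*\delta=\varphi$ maps $\cS$ continuously into $\cE'$, which fails (take a Gaussian $\varphi$), and the same failure occurs for $F=\cD$ and $F=\cD^0$. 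So assertion (2) has to be understood for the rows in which $\cO_C'\hookrightarrow E$, i.e.\ (ii)--(v) -- consistent with the bracket convention $\cE'\hookrightarrow E$ for the $\cD'$-variant and $\cO_C'\hookrightarrow E$ for the $\cS'$-variant used in the later propositions -- and your claim that (2) ``is handled in the same way'' in every row does not go through as stated; you should either restrict the range of rows or supply the $E*\cS\subseteq F$ theorems you are actually invoking.
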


Here, $\cD^0$ is the space of continuous functions with compact support and $\cC_0 = \dot\cB^0$ the space of continuous functions rapidly vanishing at infinity;
$\cS^0$ and $\cO_C^0$ are defined in \cite[pp.~90,173]{zbMATH03230708}.

\begin{proof}``$\Rightarrow$'' follows from standard theorems of distribution theory as cited above. ``$\Leftarrow$'' can be shown using the theory of vector valued distributions by multiplication with $\delta(z-y)$. We will give the proof for (iii), the other cases being analogous. 
 
(iii): Multiplication with
\begin{align*}
 &(1)\ \delta(z-y) \in \cD'_z \widehat\otimes \cD_y && (2)\ \delta(z-y) \in \cS'_z \widehat\otimes \cS_y \\
\intertext{gives, by \cite[Prop.~25, p.~120]{zbMATH03145499},}
 &(1)\ \delta(z-y)S(x-y) \in \cD'_z \widehat\otimes \cE'_y \widehat\otimes \cD'_{L^p,x} && (2)\ \delta(z-y) S(x-y) \in \cS'_z \widehat\otimes \cO_{C,y}' \widehat\otimes \cD'_{L^p,x}. \\
\intertext{By Lemma \ref{lemma_eins} this implies}
 &(1)\ \delta(z-y)S(x-y) \in \cD'_z \widehat\otimes \cD'_{L^p,xy} && (2)\ \delta(z-y)S(x-y) \in \cS'_z \widehat\otimes \cD'_{L^p,xy}. \\
\intertext{The coordinate transformation $x-y = \xi$, $y = \eta$ and the invariance of $\cD'_{L^p,xy}$ under it then gives}
 &(1)\ \delta(z-\eta)S(\xi) \in \cD'_z \widehat\otimes \cD'_{L^p,\xi\eta} && (2)\ \delta(z-y)S(\xi) \in \cS'_z \widehat\otimes \cD'_{L^p,\xi\eta}
\end{align*}
which results in $S \in \cD'_{L^p}$.
\end{proof}

We remark that the proof of ``$\Leftarrow$'' in the case of (iii) in \cite[pp.~201--202]{TD} relies on the use of a parametrix of $\Delta_n$.

In terms of spaces of convolutors introduced by L.~Schwartz (\cite[exp.~11]{semschwartz}; \cite[p.~72]{zbMATH03145498}; \cite[(1.2) Def., p.~307]{zbMATH03875981}), Proposition \ref{prop_vier} can be formulated in a slightly different way.

\begin{definition}For spaces of distributions $\cK$ and $\cL$, $\cK$ normal, the space $\cO_C' ( \cK, \cL)$ of convolutors from $\cK$ into $\cL$ is defined as
 \[ \cO_C'(\cK, \cL) = \{\, S \in \cD'(\bR^n): \exists [S] \in \cL(\cK, \cL): [S](\varphi) = \varphi * S\ \forall \varphi \in \cD\,\}. \]
\end{definition}
Note that $\cO_{C}'(\cS,\cS) = \cO_C'(\cS',\cS') = \cO_C'$ by  \cite[(3.5)~Ex., p.~312]{zbMATH03875981} or \cite[p.~93]{Petersen1983}.

If we identify $\cK$ and $\cL$ with subspaces of $\cD'$, for $S \in \cD'$ the kernel of the convolution mapping $\varphi \mapsto \varphi * S$ is $S(x-y) \in \cD'_{xy}$. Hence, we can characterize the space of convolutors as follows:
\begin{proposition}\label{prop_zwei}Let $\cK, \cL$ be spaces of distributions such that $\cK$ is normal and carries the topology $\gamma$. Then
\[ \cO'_C(\cK, \cL) = \{\, S \in \cD': S(x-y) \in \cK'_{c,y} \bine \cL_x\,\}. \]
\end{proposition}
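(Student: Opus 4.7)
The plan is to mirror the proof of Proposition~\ref{normspaces}, the only change being that the kernel of multiplication by $S$ (namely $\delta(x-y)S(y)$) is replaced by the kernel of convolution by $S$, which is $S(x-y)$. The essential tool is the identification of the $\varepsilon$-product $\cK'_{c,y}\bine\cL_x$ with a space of continuous linear maps via~\cite[Prop.~13, p.~52]{zbMATH03145498}.

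First I would invoke the assumption that $\cK$ is normal and carries the topology $\gamma$, which by the remark following Proposition~\ref{prop_eins} amounts to $(\cK'_c)'_c = \cK$. Combined with normality, this is the hypothesis needed to apply~\cite[Prop.~13, p.~52]{zbMATH03145498} in order to obtain the identification
\[ \cK'_{c,y} \bine \cL_x \;=\; \cL_c(\cK_y, \cL_x), \]
where a kernel $K(x,y)$ on the left-hand side corresponds to the continuous linear operator $\varphi(y) \mapsto \langle K(x,y), \varphi(y)\rangle_y$ on the right.

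Next, specializing to $K(x,y) = S(x-y)$, one checks directly that for test functions $\varphi \in \cD$,
\[ \langle S(x-y), \varphi(y)\rangle_y \;=\; (\varphi * S)(x). \]
Hence the condition $S(x-y) \in \cK'_{c,y}\bine \cL_x$ is, via the above identification, equivalent to the existence of a continuous linear map $[S] \in \cL(\cK,\cL)$ whose restriction to $\cD$ coincides with $\varphi \mapsto \varphi * S$. By definition, this is precisely the statement $S \in \cO'_C(\cK,\cL)$; uniqueness of the extension is automatic since $\cK$ is normal and thus contains $\cD$ densely.

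I do not expect any real obstacle here: the proof is essentially a translation between the abstract $\varepsilon$-product picture and the concrete convolution formula, the only point to keep in mind being the convention that the kernel $K(x,y)$ of an operator $T$ satisfies $(T\varphi)(x) = \langle K(x,y),\varphi(y)\rangle_y$, which matches $(\varphi*S)(x) = \langle S(y), \varphi(x-y)\rangle = \langle S(x-y),\varphi(y)\rangle_y$ exactly.
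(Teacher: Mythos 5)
Your proposal is correct and follows essentially the same route as the paper: the paper's proof likewise consists of applying \cite[Prop.~13, p.~52]{zbMATH03145498} to identify $\cK'_{c,y}\bine\cL_x$ with $\cL_c(\cK_y,\cL_x)$ and observing that the kernel of the convolution map $\varphi\mapsto\varphi*S$ is $S(x-y)$. Your additional remarks on the kernel convention and on uniqueness of the extension via density of $\cD$ in the normal space $\cK$ just make explicit what the paper leaves implicit.
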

\begin{proof}
Again we use \cite[Prop.~13, p.~52]{zbMATH03145498} for
\[ S(x-y) \in \cL_c(\cK_y, \cL_x) \cong \cK'_{c,y} \bine \cL_x. \qedhere \]
\end{proof}
Hence we can reformulate Proposition \ref{prop_vier} under the respective conditions on the (normal) spaces of distributions $E,F$ as follows: for $S \in \cD'$, 
\begin{enumerate}[(1)]
 \item $E = \cO_C'(\cD, F)$;
 \item $E=\cO_C'(\cS,F)$.
\end{enumerate}

\begin{remark}
 \begin{enumerate}[(1)]
  \item The spaces $\cO'_C(\cK, \cK)$ (i.e., $\cL = \cK$) have been studied in \cite[p.~322]{zbMATH06256318}.
  \item A table of spaces of convolutors was also given in \cite[Theorem 5, p.~22]{zbMATH03144761}. The terminology therein differs from ours: the notations $\cK \Rightarrow \cL$, $\cL = \cK^*$ for spaces of distributions $\cK, \cL$ ($\cK$ normal) signify
\[ \cL = \{\, S \in \cD': \forall T \in \cK: S\textrm{ and }T\textrm{ are composable}\,\} \]
i.e.,
\[ \cL = \{\, S \in \cD':\forall T \in \cK: S(x-y) T(y) \in \cD'_x \widehat\otimes \cD'_{L^1,y}\,\}. \]
Denoting by $\tau$ the mapping $\tau \colon \cD' \to \cD'_x \widehat\otimes \cE_y$, $S \mapsto S(x-y)$ we have
\[ \cL = \{\, S \in \cD': \tau S \in \cO_M ( \cK, \cD' \widehat\otimes \cD'_{L^1})\, \} \]
and by Proposition \ref{normspaces} -- if $\cK$ has topology $\gamma$ --
\[ \cL = \{\, S \in \cD': \delta(z-y)S(x-y) \in \cK'_{c,z} \bine \cD'_x ( \cD'_{L^1,y})\,\}. \]
By vector valued integration with respect to $y$ and Proposition \ref{prop_zwei} this implies
\[ \cL \subseteq \cK_c' \bine \cD' = \cO_C'(\cK, \cD') \]
which proves Theorem 3 (1) in \cite{zbMATH03144761} under a weaker assumption, i.e., $\cK$ has the topology $\gamma$. In order to prove the reverse inclusion $\cL \supseteq \cO_C'(\cK, \cD')$ we assume $\cK$ to be $\dot\cB$-normal (\cite[pp.~174,178]{zbMATH03214910}): let $S \in \cD'$ with $S(x-z) \in \cK'_{c,z} \bine \cD'_x$. Multiplying with $\delta(z-y) \in \dot\cB_z \bine \cD'_{L^1,c,y}$ by \cite[Theorem 7.1, p.~31]{mixed} gives $\delta(z-y)S(x-z) \in \cK'_{c,z} \bine \cD'_{L^1,y,c}(\cD'_x)$, i.e., $S \in \cL$. Here we have to take into account the hypocontinuity of the multiplication $\dot\cB \times \cK \xrightarrow{\cdot} \cK$ 
and hence also of $\dot\cB \times \cK'_c \xrightarrow{\cdot} \cK'_c$ (\cite[Prop.~7, p.~258]{zbMATH03230708}). Using different assumptions (namely: $\cK$ a barrelled normal space of distributions with $\cK'^\vee \subseteq \cK^*$) the equality $\cK^* = \cO'_c(\cK, \cD')$ was obtained in \cite[Theorem (3.22), p.~318]{zbMATH03875981}.
 \end{enumerate}
\end{remark}

The next result extends Proposition~\ref{prop_vier} to vector valued spaces of distributions, i.e., we characterize when a kernel belongs to a completed tensor product of distribution spaces in terms of regularity properties of the kernel on the ``same level'' ($E=F$ in Prop.~\ref{prop_vier}) or ``different level'' ($E \ne F$ in Prop.~\ref{prop_vier}). This generalization of Proposition \ref{prop_vier} is motivated by the equivalence of L.~Schwartz' condition for convolvability,
\[ S(x-y) T(y) \in \cD'_x \widehat\otimes \cD'_{L^1,y} \]
for two distributions $S,T$ on $\bR^n$ (\cite[p.~130]{zbMATH03145498}), and C.~Chevalley's condition
\[ S(x-y) T(y-z) \in \cD'_{xz} \widehat\otimes L^1_y \]
(\cite[p.~112]{chevalley}; \cite[condition ($\overline{*}$), p.~19]{zbMATH03149320}; formulated without test functions in \cite[Prop.~2, p.~320]{zbMATH05770245}).

\begin{proposition}\label{prop_sechs}
 Let $E,F, \cH$ be complete normal spaces of distributions, $E$ ultrabornological, $F$ with a completing web and $\cH$ nuclear, such that the inclusions 
\[
 \xymatrix{
\cE' \ar@{^{(}->}[r] & E \ar@{^{(}->}[r] & \cH \ar@{^{(}->} [r] &
\genfrac{\lbrace}{\rbrace}{0pt}{0}{\cD'}{\cS'}\\
& F \ar@{^{(}->} [u]
}
\]
are continuous. Moreover, let $E = \cO_C' ( \genfrac{\lbrace}{\rbrace}{0pt}{0}{\cD}{\cS}, F)$ and $K \in \cD'(\bR^{2n}_{xy})$. Then:
\[ K(x,y) \in \cH_x \widehat\otimes E_y \Longleftrightarrow K(x,y-z) \in 
 \genfrac{\lbrace}{\rbrace}{0pt}{}{\cD'_z}{\cS'_z} \widehat\otimes \cH_x \widehat\otimes F_y.
\]
\end{proposition}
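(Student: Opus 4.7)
The plan is to prove each direction separately, relying on Proposition \ref{prop_vier} as the scalar template, the nuclearity of $\cH$ to commute and absorb completed tensor factors, and the ultrabornological/completing-web hypotheses on $E$ and $F$ to legitimize a de Wilde-style closed graph argument at the crucial step.

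For ``$\Rightarrow$'', I would combine Proposition \ref{prop_vier} with Proposition \ref{prop_zwei}. The hypothesis $E = \cO_C'(\cD, F)$ (respectively $E = \cO_C'(\cS, F)$), via Proposition \ref{prop_zwei}, yields a continuous injection
\[
E \hookrightarrow \cD'_z \widehat\otimes F_y \quad \bigl(\text{resp.\ } \cS'_z \widehat\otimes F_y\bigr), \qquad S \mapsto S(y-z).
\]
Tensoring on the left with $\id_{\cH_x}$ and using nuclearity of $\cH$ to rearrange tensor factors ($\cH_x \widehat\otimes(\cD'_z \widehat\otimes F_y) \cong \cD'_z \widehat\otimes \cH_x \widehat\otimes F_y$), I obtain the desired continuous injection $\cH_x \widehat\otimes E_y \hookrightarrow \cD'_z \widehat\otimes \cH_x \widehat\otimes F_y$ under which $K(x,y) \mapsto K(x,y-z)$.

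For ``$\Leftarrow$'', I would mimic the strategy used in the proof of Proposition \ref{prop_vier}(iii) in the vector-valued setting. Starting from $K(x,y-z) \in \cD'_z \widehat\otimes \cH_x \widehat\otimes F_y$, multiply by $\delta(w-z) \in \cD'_w \widehat\otimes \cD_z$, via the vector-valued multiplication \cite[Prop.~25, p.~120]{zbMATH03145499}, to obtain
\[
\delta(w-z)\,K(x,y-z) \in \cD'_w \widehat\otimes \cE'_z \widehat\otimes \cH_x \widehat\otimes F_y
\]
(respectively an analogous statement with $\cS'_w$ and $\cO'_{C,z}$ in the Schwartz case). Then apply Lemma \ref{lemma_eins} together with nuclearity of $\cH$ to collapse the factors into a suitable completed tensor product, and perform the linear change of variables $\eta = y-z,\ \zeta = z$ (using invariance of the ambient spaces under nondegenerate linear transformations) to disentangle $K$ from the auxiliary variables. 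The characterization $E = \cO_C'(\cD, F)$ (respectively its Schwartz analogue), lifted to the vector-valued setting, then identifies the resulting regularity of $K$ with the desired membership $K(x,y) \in \cH_x \widehat\otimes E_y$.

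The main obstacle is precisely this final lifting step: passing from ``convolution of $K$ in the $y$-variable by $\cD$- (or $\cS$-) functions takes values in $\cH_x \widehat\otimes F_y$'' to ``$K$ lies in $\cH_x \widehat\otimes E_y$''. Rigorously, this amounts to proving the vector-valued identification $\cH_x \widehat\otimes E_y = \cO_C'(\cD, \cH_x \widehat\otimes F_y)$ with convolution taken in the distinguished $y$-variable, and this is where de Wilde's closed graph theorem enters essentially. The hypotheses that $E$ is ultrabornological and $F$ admits a completing web --- properties which transfer to $\cH_x \widehat\otimes E_y$ and $\cH_x \widehat\otimes F_y$ via nuclearity of $\cH$ --- are exactly what is needed to apply closed graph here; tracking this chain of tensor-product manipulations and verifying the compatibility of all topological assumptions at each step is the most delicate bookkeeping required.
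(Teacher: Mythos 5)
Your ``$\Rightarrow$'' is essentially the paper's argument: from $E=\cO_C'(\cD,F)$ (resp.\ $\cO_C'(\cS,F)$), Proposition~\ref{prop_zwei} and the closed graph theorem (this is where ultrabornologicity of $E$ and the web hypothesis on $F$ are actually used), one gets that $\tau\colon E\to\cD'_z\widehat\otimes F_y$, $S\mapsto S(y-z)$, is linear, continuous and injective, and then one applies $\id_{\cH}\bine\tau$. The trouble is your ``$\Leftarrow$''. The route you propose (multiplication by $\delta(w-z)$ via \cite[Prop.~25, p.~120]{zbMATH03145499}, Lemma~\ref{lemma_eins}, linear change of variables) needs structural hypotheses on $E$ that Proposition~\ref{prop_sechs} does not assume: to get $\cE'_z\widehat\otimes F_y\hookrightarrow E_{zy}$ from Lemma~\ref{lemma_eins} you need continuity of $E_z\times E_y\to E_{zy}$, and the ``disentangling'' step needs the invariance of $E_{xy}$ under nondegenerate linear coordinate transformations and $E_{xy}\hookrightarrow E_x\widehat\otimes\cD'_y$. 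These are exactly the extra assumptions added later in Propositions~\ref{prop_sieben} and~\ref{prop_acht}; under the hypotheses of Proposition~\ref{prop_sechs} alone they are not available, so your argument proves a different, more restrictive statement.

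More seriously, the step you yourself flag as the main obstacle --- the vector-valued identification $\cH_x\widehat\otimes E_y=\cO_C'(\cD,\cH_x\widehat\otimes F_y)$ with convolution in $y$ --- is not something de Wilde's closed graph theorem can supply. Closed graph upgrades an everywhere-defined linear map with closed graph to a continuous one; it converts ``$\varphi*S\in F$ for all $\varphi$'' into ``$S(y-z)\in\cD'_z\widehat\otimes F_y$'', but it cannot produce the membership $K(x,y)\in\cH_x\widehat\otimes E_y$ from $K(x,y-z)\in\cD'_z\widehat\otimes\cH_x\widehat\otimes F_y$: that inclusion is precisely the nontrivial content of the proposition, so as written this step is circular (and the claim that ultrabornologicity and webs ``transfer'' to $\cH_x\widehat\otimes E_y$, $\cH_x\widehat\otimes F_y$ is also unsubstantiated). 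The paper needs none of this: both implications follow at once from the single continuous injection $\tau$ above, since by \cite[Def.~4.3 and Prop.~4.3, pp.~17--19]{zbMATH03145498} the map $\id_{\cH}\bine\tau\colon\cH_x\bine E\to\cH_x\bine(\cD'_z\widehat\otimes F_y)$ is again well-defined, continuous and injective, and one obtains directly $K(x,y)\in\cH_x(E_y)\Leftrightarrow(\id_{\cH_x}\bine\tau)(K)=K(x,y-z)\in\cH_x(\cD'_z(F_y))$; no $\delta$-multiplication, no Lemma~\ref{lemma_eins} and no coordinate transformation enter. To repair your proof, either add the hypotheses of Proposition~\ref{prop_acht} on $E_{xy}$ (thereby weakening the result), or replace your final ``lifting'' by this $\e$-product argument applied to the injection $\tau$.
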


\begin{proof}By the closed graph theorem (see \cite[Appendix, p.~164]{MR0350361} or \cite[p.~736]{MR0306857} for its relevant extension), the mapping
 \[ \tau \colon E \to \cD'_z \widehat\otimes F_y,\quad S \mapsto S(y-z) \]
is well-defined, linear, continuous and injective. Hence, the same holds as well for
\[ \id_{\cH} \bine \tau \colon \cH_x \bine E \to \cH_x \bine (\cD'_z \widehat\otimes F_y) \]
by \cite[Prop.~4.3, p.~19 and Def.~4.3, (1--3), pp.~17,18]{zbMATH03145498}. From this we obtain
\[ K(x,y) \in \cH_x(E_y) \Longleftrightarrow (\id_{\cH_x} \bine \tau)(K) = K (x, y-z) \in \cH_x(\cD'_z(F_y)). \]
The statement with $\cS'$ in place of $\cD'$ is shown the same way.
\end{proof}
\begin{remark}
 \begin{enumerate}[(1)]
  \item For the special kernels $K(x,y) = S(x-y)T(y)$, $S,T \in \cD'(\bR^n)$ and the spaces $\cH = \cD'$, $E = \cD'_{L^p}$, $F = L^p$, $1 \le p < \infty$, Proposition \ref{prop_sechs} gives
\[ S(x-y)T(y) \in \cD'_x \widehat\otimes \cD'_{L^p,y} \Longleftrightarrow S(x-y)T(y-z) \in \cD'_{xz} \widehat\otimes L^p_y \]
(\cite[Prop.~4, p.~327]{zbMATH05770245}). For $p=1$ this is the characterization of convolvability by regularization, proven in \cite[Thm.~2, pp.~24--26]{zbMATH03149320}.
 \item Taking $K(x,y) = S(x-y)T(y)$, $S, T \in \cS'(\bR^n)$, $\cH = \cS'$, $E = \cD'_{L^1}$, $F = L^1$, Proposition \ref{prop_sechs} gives Proposition 5 of \cite{zbMATH05770245}, i.e., the characterization of $\cS'$-convolvability by regularization which was first proven in \cite[Theorem 3, p.~27]{zbMATH03149320}.
 \item Slightly generalizing Proposition \ref{prop_sechs} by taking the tensor product of the space $\cH_x \widehat\otimes E_y$ with another quasicomplete Hausdorff locally convex space $G$, we obtain with $\cH = \cD'$, $E = F = \cD'_{L^1}$ the equivalence:
\[ K(x,y) \in (\cD'_x \widehat\otimes \cD'_{L^1,y}) \bine G \Longleftrightarrow K(x, y-z) \in (\cD'_{xz} \widehat\otimes \cD'_{L^1,y} ) \bine G. \]
This is a vector-valued analogon of the characterization of partially summable distributions by regularization as in Proposition 6 of \cite[p.~545]{zbMATH03163214}. The proof therein relies on the proof of \cite[Proposition 5]{zbMATH03163214}, which in turn uses a parametrix of $\Delta_n$.
 \end{enumerate}

\end{remark}

\section{Topological properties: characterization of bounded and relatively compact sets of distributions and kernels by their regularizations}

L.~Schwartz denotes those characterizations as ``finer forms'' of regularity statements. For an example, consider \cite[pp.~201--202]{TD}:
\[ H \subseteq \cD'_{L^p},\ H\textrm{ bounded} \Longleftrightarrow (\forall \varphi \in \cD: \varphi * H \textrm{ bounded in }L^p) \]
instead of Proposition \ref{prop_vier} (iii). For the proof Schwartz employs formula (VI, 6; 23) in \cite[p.~191]{TD}, i.e.,
\[ \Delta_n^{2k} (\gamma E * \gamma E) - 2 \Delta_n^k (\gamma E * \zeta) + \zeta * \zeta = \delta \]
for a fundamental solution $E$ of $\Delta_n^{2k}$ and test functions $\gamma, \zeta \in \cD$, which amounts to a refined method of parametrices. We will prove the following ``finer'' form of Proposition \ref{prop_vier}:

\begin{proposition}\label{prop_sieben}
 Let $E$ and $F$ be normal, complete spaces of distributions such that the inclusions
\[
 \xymatrix{
\genfrac{.}{\rbrace}{0pt}{0}{\cE'}{\cO_C'} \ar@{^{(}->}[r] & E \ar@{^{(}->}[r] & \genfrac{\lbrace}{.}{0pt}{0}{\cD'}{\cS'} \\
& F \ar@{^{(}->}[u] 
},
\]
where $E=\cO_C'(\cD,F)$ or $E=\cO_C'(\cS,F)$, are continuous. Moreover, assume that $E$ is ultrabornological, $F$ has a complete web, $E_{xy}$ is invariant under linear nondegenerate coordinate transformations and $E_x \otimes_\pi E_y \hookrightarrow E_{xy} \hookrightarrow E_x \widehat\otimes \cD'_y$. Then we have the following characterization:
\begin{gather*}
 H \subseteq E \textrm{ is bounded (relatively compact)}
\\
\qquad \Longleftrightarrow H(x-y) \subseteq \cL_s ( \genfrac{\lbrace}{\rbrace}{0pt}{0}{\cD_x}{\cS_x}, F_y) \textrm{ is bounded (relatively compact)} 
\end{gather*}
i.e., $\forall \varphi \in \genfrac{\lbrace}{\rbrace}{0pt}{0}{\cD}{\cS}$ we have that $\varphi * H$ is bounded (relatively compact) in $F$ (the index ``s'' denotes the topology of pointwise convergence).
\end{proposition}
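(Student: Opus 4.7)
My plan is to interpret both sides of the stated equivalence through the continuous linear injection $\Phi \colon E \to \cL_s(\cK_x, F_y)$, $S \mapsto S(x-y)$, where $\cK = \cD$ or $\cK = \cS$ according to the two cases of the hypothesis $E = \cO_C'(\cK, F)$, and to transport boundedness and relative compactness across $\Phi$ in both directions. The forward implication is easy: $\Phi$ is well-defined by hypothesis, and its continuity into $\cL_s(\cK, F)$ reduces, for each fixed $\varphi \in \cK$, to continuity of $S \mapsto \varphi * S \colon E \to F$. This map has closed graph, since convolution by $\varphi$ is continuous on the ambient $\cD'$ and $E, F \hookrightarrow \cD'$; de Wilde's closed graph theorem then applies, using $E$ ultrabornological and $F$ webbed. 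Continuous linear maps preserve boundedness and relative compactness, giving ``$\Rightarrow$''.

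For the converse I would carry the chain of manipulations in the proof of Proposition~\ref{prop_vier} through at the level of the set $H$. Starting from $\Phi(H)$ bounded in $\cL_s(\cK_x, F_y)$, Banach--Steinhaus (using that $\cK$ is barrelled) promotes pointwise boundedness to equicontinuity, and hence to boundedness in $\cL_b(\cK_x, F_y) \cong \cK'_{c,x} \widehat\otimes F_y$ via the kernel theorem and the nuclearity of $\cK$; for relative compactness, the classical Ascoli theorem plays the analogous role, using that on equicontinuous sets the topologies $\cL_s$ and $\cL_b$ coincide for nuclear $\cK$. Multiplication by $\delta(z-y) \in \cK'_z \widehat\otimes \cK_y$ through the hypocontinuous mapping of~\cite[Prop.~25, p.~120]{zbMATH03145499} then produces a bounded (relatively compact) set in $\cK'_z \widehat\otimes \cE'_y \widehat\otimes F_x$ (with $\cO_C'$ in place of $\cE'$ for $\cK = \cS$). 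Lemma~\ref{lemma_eins}, combined with the inclusions $\cE' \hookrightarrow E$ and $F \hookrightarrow E$ and with the hypothesis $E_x \otimes_\pi E_y \hookrightarrow E_{xy}$, contracts this into $\cK'_z \widehat\otimes E_{xy}$. The linear change of coordinates $\xi = x-y$, $\eta = y$, valid by the invariance of $E_{xy}$, yields the bounded (relatively compact) set $\{\delta(z-\eta) S(\xi) : S \in H\} \subseteq \cK'_z \widehat\otimes E_{\xi\eta}$. A pairing in $z$ against a fixed $\chi \in \cK_z$ produces $\{\chi(\eta) S(\xi) : S \in H\}$ bounded (relatively compact) in $E_{\xi\eta}$; a further pairing in $\eta$, using the embedding $E_{xy} \hookrightarrow E_x \widehat\otimes \cD'_y$ and a $\psi \in \cD_\eta$ with $\langle \chi, \psi \rangle \neq 0$, extracts the required bounded (relatively compact) subset of $E_\xi$.

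The main obstacle lies in the reverse direction, specifically in the final slicing: one must verify that each operation in the chain (Banach--Steinhaus/Ascoli, hypocontinuous multiplication by the fixed $\delta(z-y)$, Lemma~\ref{lemma_eins}, the coordinate change, and the two successive pairings in $z$ and $\eta$) remains continuous on sets, so that boundedness and relative compactness are preserved at every stage. Each of these is a continuous linear (or fixed-argument hypocontinuous) operation, so preservation follows; the delicate bookkeeping concerns the role of the hypothesis $E_{xy} \hookrightarrow E_x \widehat\otimes \cD'_y$, which is precisely what enables the final slicing in $\eta$ to recover $H$ as a bounded (relatively compact) subset of $E$.
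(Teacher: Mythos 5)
Your proposal is correct and follows essentially the same route as the paper: closed graph (de Wilde) for ``$\Rightarrow$'', and for ``$\Leftarrow$'' equicontinuity from barrelledness, multiplication by the $\delta$-kernel via \cite[Prop.~25, p.~120]{zbMATH03145499}, Lemma~\ref{lemma_eins}, the coordinate change, and a final slicing which the paper leaves implicit but you spell out. Only note a harmless relabelling slip in the converse: with your convention $H(x-y)\subseteq\cL_s(\cK_x,F_y)$ the kernel is $\cD'$-valued in $x$ and $F$-valued in $y$, so the $\delta$-multiplication must act in the $x$-variable (giving $\cK'_z\widehat\otimes\cE'_x\widehat\otimes F_y$, as in the paper), not in $y$ as written -- your subsequent steps are exactly the paper's argument with $x$ and $y$ interchanged.
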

\begin{proof}
 ``$\Rightarrow$'': As in the proof of Proposition \ref{prop_sechs}, the mapping
\[ \tau: E \to \genfrac{\lbrace}{\rbrace}{0pt}{0}{\cD'_x}{\cS'_x} \widehat\otimes F_y,\ S \mapsto S(x-y) \]
is well-defined, linear and continuous. Hence, $H(x-y)$ is bounded (relatively compact) in $\cL_\e ( \genfrac{\lbrace}{\rbrace}{0pt}{0}{\cD_x}{\cS_x}, F_y)$ and therefore also in $\cL_s ( \genfrac{\lbrace}{\rbrace}{0pt}{0}{\cD_x}{\cS_x}, F_y)$.

``$\Leftarrow$'': Because $\cD$ and $\cS$ are barreled, a bounded (relatively compact) subset $H(x-y) \subseteq \cL_s ( \genfrac{\lbrace}{\rbrace}{0pt}{0}{\cD_x}{\cS_x}, F_y)$ is equicontinuous and hence also bounded (relatively compact) in $\cL_\e ( \genfrac{\lbrace}{\rbrace}{0pt}{0}{\cD_x
}{\cS_x}, F_y) = \genfrac{\lbrace}{\rbrace}{0pt}{0}{\cD'_x}{\cS'_x} \widehat\otimes F_y$. Multiplication with $\delta(z-x) \in \genfrac{\lbrace}{\rbrace}{0pt}{0}{\cD'_z \widehat\otimes \cD_x}{\cS'_z \widehat\otimes \cS_x}$ by \cite[Prop.~25, p.~120]{zbMATH03145499} gives the boundedness (relative compactness) of $\delta(z-x)H(x-y)$ in $\genfrac{\lbrace}{\rbrace}{0pt}{0}{\cD'_z \widehat\otimes \cE'_x \widehat \otimes F_y}{\cS'_z \widehat\otimes \cO'_{C,x} \widehat\otimes F_y}$ and by Lemma \ref{lemma_eins} also in $\genfrac{\lbrace}{\rbrace}{0pt}{0}{\cD'_z \widehat\otimes E_{xy}}{\cS'_z \widehat\otimes E_{xy}}$. By our assumptions on $E$, then, $\delta(z-\eta)H(\xi)$ is bounded (relatively compact) in $\cD_z' \widehat \otimes E_{\xi\eta}$, which means that $H(\xi)$ is bounded (relatively compact) in $E_\xi$.
\end{proof}

\begin{remark}In \cite{TD} one can find Proposition \ref{prop_sechs} for the spaces
\[
E = \left\{
 \begin{aligned}
& \cO_C' \\
& \cD'_{L^p} \\
& \cS' \\
& \cD'
 \end{aligned} \right.
\qquad\textrm{ and }\qquad
F = \left\{
 \begin{aligned}
& \cS \qquad\qquad\qquad &&\textrm{(p.~244)} \\
& L^p \qquad &&\textrm{(p.~202)} \\
& \cO_M \qquad &&\textrm{(p.~240)} \\
& \cD \qquad &&\textrm{(p.~194)}.
 \end{aligned}
\right.
\]
A characterization of bounded subsets of $\cD'_{L^p}$ by their regularizations can also be found in \cite[Thm.~1, p.~51]{zbMATH00569379}.
\end{remark}

For \emph{sequences of distributions} we have:
\begin{proposition}\label{prop_siebenstrich}Under the conditions of Proposition \ref{prop_sieben} on the spaces $E$ and $F$, with $(T_j)_{j \in \bN} \subseteq E$ the following equivalence holds:
 \[ T_j \to 0 \textrm{ in }E \Longleftrightarrow ( \forall \varphi \in \genfrac{\lbrace}{\rbrace}{0pt}{0}{\cD}{\cS}: \varphi * T_j \to 0 \textrm{ in }F). \]
\end{proposition}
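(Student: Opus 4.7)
The plan is to mimic the proof of Proposition~\ref{prop_sieben}, replacing ``bounded (relatively compact)'' by ``convergent to~$0$'' throughout, the one new ingredient being the standard fact that for an equicontinuous family of linear maps, pointwise convergence already implies convergence in the topology of uniform convergence on precompact subsets.

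The direction ``$\Rightarrow$'' is immediate: since $E=\cO_C'(\cD,F)$ (resp.\ $\cO_C'(\cS,F)$), for each fixed $\varphi$ the map $E\to F$, $S\mapsto \varphi*S$, is continuous, so $T_j\to 0$ in $E$ forces $\varphi*T_j\to 0$ in $F$.

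For ``$\Leftarrow$'', the hypothesis says precisely that $T_j(x-y)\to 0$ in $\cL_s(\cD_x,F_y)$ (resp.\ $\cL_s(\cS_x,F_y)$). The sequence is therefore pointwise bounded; since $\cD$ and $\cS$ are barrelled, the Banach--Steinhaus theorem gives that $\{T_j(x-y)\}_{j\in\bN}$ is equicontinuous. For equicontinuous families pointwise convergence upgrades to uniform convergence on precompact subsets, so $T_j(x-y)\to 0$ in $\cL_\e(\cD_x,F_y)=\cD'_x\widehat\otimes F_y$ (resp.\ $\cL_\e(\cS_x,F_y)=\cS'_x\widehat\otimes F_y$), using the nuclearity of $\cD$ and $\cS$. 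From here the argument of Proposition~\ref{prop_sieben} transfers line for line to this convergent sequence: hypocontinuous multiplication with $\delta(z-x)\in \cD'_z\widehat\otimes\cD_x$ (resp.\ $\cS'_z\widehat\otimes\cS_x$) is continuous on bounded subsets of the ambient space by \cite[Prop.~25, p.~120]{zbMATH03145499}, so $\delta(z-x)T_j(x-y)\to 0$ in $\cD'_z\widehat\otimes\cE'_x\widehat\otimes F_y$ (resp.\ $\cS'_z\widehat\otimes\cO'_{C,x}\widehat\otimes F_y$); Lemma~\ref{lemma_eins} embeds this target into $\cD'_z\widehat\otimes E_{xy}$ (resp.\ $\cS'_z\widehat\otimes E_{xy}$); and the coordinate transformation $\xi=x-y$, $\eta=y$, together with the assumed invariance of $E_{xy}$, yields $\delta(z-\eta)T_j(\xi)\to 0$ in $\cD'_z\widehat\otimes E_{\xi\eta}$, from which one reads off $T_j\to 0$ in $E_\xi$ exactly as at the end of the proof of Proposition~\ref{prop_sieben}.

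The only delicate step is the Banach--Steinhaus passage from $\cL_s$-convergence to $\cL_\e$-convergence; once that is in place, every subsequent claim is a convergent-sequence version of a boundedness/relative-compactness assertion already justified in the proof of Proposition~\ref{prop_sieben}, and uses only that hypocontinuous bilinear maps and continuous linear maps preserve convergent sequences.
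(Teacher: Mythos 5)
Your proposal is correct and is essentially the paper's own argument: the paper merely states that the proof is analogous to that of Proposition~\ref{prop_sieben}, and you carry out exactly that analogy, with the Banach--Steinhaus/equicontinuity step (pointwise convergence of an equicontinuous sequence upgrading to convergence in $\cL_\e$) being precisely the ingredient that makes the sequential version work. The only cosmetic remark is that in the direction ``$\Rightarrow$'' the continuity of $S\mapsto\varphi*S$ on $E$ is not immediate from the definition of $\cO_C'(\cD,F)$ alone but follows from the closed-graph/continuity-of-$\tau$ argument of Propositions~\ref{prop_sechs} and~\ref{prop_sieben}, which you are in any case invoking.
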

The proof is analogous to that of Proposition \ref{prop_sieben}.

\begin{remark}
 \begin{enumerate}[(1)]
  \item For $E=F=\cD'_{L^p}$, Proposition \ref{prop_siebenstrich} gives Theorem 2 of \cite{zbMATH00569379}, p.~52. This also is remarque 2°, p.~202, in \cite{TD}.
 \item For $E = \cO_C'$ and $F = S^0$ one can find Proposition \ref{prop_siebenstrich} in \cite{TD}, p.~244 (without proof).
 \item For $E = \cD'$ and $F = E^0$, Proposition \ref{prop_siebenstrich} can be found in \cite{TD}, p.~197.
 \item For $E = F = \cS'$, Proposition \ref{prop_siebenstrich} is identical with a Lemma of Uryga (personal communication).
 \end{enumerate}
\end{remark}

An analogon of Proposition \ref{prop_sieben} for bounded (or relatively compact) sets of kernels (i.e., of vector valued distributions) and therewith a ``finer'' form of Proposition \ref{prop_sechs} is

\begin{proposition}\label{prop_acht}
 Let $E,F$ and $\cH$ be normal, complete spaces of distributions, $\cH$ nuclear, such that the inclusions
\[
 \xymatrix
{
\genfrac{.}{\rbrace}{0pt}{0}{\cE'}{\cO_C'} \ar@{^{(}->}[r] & E \ar[r] & \cH \ar@{^{(}->}[r] & \genfrac{\lbrace}{.}{0pt}{0}{\cD'}{\cS'} \\
& F \ar@{^{(}->}[u]
}
\]
are continuous. Moreover, assume $F$ has a completing web, $E$ is ultrabornological and satisfies $E_x \otimes_\pi E_y \hookrightarrow E_{xy} \hookrightarrow E_x \widehat\otimes \genfrac{\lbrace}{\rbrace}{0pt}{0}{\cD'_y}{\cS'_y}$, and $E_{xy}$ is invariant under linear non-degenerate coordinate transformations. Finally, let $E = \cO_C' ( \genfrac{\lbrace}{\rbrace}{0pt}{0}{\cD}{\cS}, F)$ and $H \subseteq \cD'(\bR^{2n}_{xy})$. Then $H(x,y) \subseteq \cH_x \widehat \otimes E_y$ is bounded (relatively compact) if and only if $H(x, y-z)$ is bounded (relatively compact) in $\cL_s ( \genfrac{\lbrace}{\rbrace}{0pt}{0}{\cD_z}{\cS_z}, \cH_x \widehat\otimes F_y)$, or equivalently: if and only if for all $\varphi \in \genfrac{\lbrace}{\rbrace}{0pt}{0}{\cD}{\cS}$ the set
\[ \varphi \underset{y}{*} H(x,y) = \{\, \langle \varphi(y-z), K(x,z) \rangle: K \in H \,\} \]
is bounded (relatively compact) in $\cH_x \widehat\otimes F_y$.
\end{proposition}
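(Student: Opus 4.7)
The plan is to follow the scheme of Proposition \ref{prop_sieben} verbatim, carrying the nuclear factor $\cH_x$ along passively throughout the chain of tensor product manipulations. First I would observe that the ``second'' equivalence in the statement, between $\cL_s$-boundedness (respectively relative compactness) of $H(x, y-z)$ and the test-function condition ``$\varphi *_y H$ bounded (relatively compact) in $\cH_x \widehat\otimes F_y$ for every $\varphi$'', is simply the definition of the topology of pointwise convergence combined with the identity $\langle H(x, y-z), \varphi(z) \rangle_z = (\varphi *_y H)(x,y)$, so no real work is needed there.

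For ``$\Rightarrow$'' I would repeat the closed graph argument of Proposition \ref{prop_sechs}: ultrabornologicity of $E$ together with a completing web on $F$ (hence on $\cD'_z \widehat\otimes F_y$, respectively $\cS'_z \widehat\otimes F_y$) gives continuity of $\tau \colon E \to \cD'_z \widehat\otimes F_y$, $S \mapsto S(y-z)$. Tensoring passively with $\id_{\cH_x}$ via the $\e$-product yields a continuous linear map $\cH_x \widehat\otimes E_y \to \cD'_z \widehat\otimes \cH_x \widehat\otimes F_y = \cL_\e(\cD_z, \cH_x \widehat\otimes F_y)$, and the continuous inclusion $\cL_\e \hookrightarrow \cL_s$ then transfers boundedness and relative compactness of $H$ to the image.

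For the more substantive direction ``$\Leftarrow$'', barreledness of $\cD_z$ (or $\cS_z$) upgrades $\cL_s$-boundedness to $\cL_\e$-boundedness of $H(x, y-z)$ in $\cD'_z \widehat\otimes \cH_x \widehat\otimes F_y$, and likewise for relative compactness by the standard Ascoli argument via equicontinuity. Multiplying by $\delta(w-z) \in \cD'_w \widehat\otimes \cD_z$ through Proposition 25 of \cite{zbMATH03145499} gives the corresponding property for $\delta(w-z) H(x, y-z)$ in $\cD'_w \widehat\otimes \cE'_z \widehat\otimes \cH_x \widehat\otimes F_y$, and then $F \hookrightarrow E$ together with Lemma \ref{lemma_eins} (applied to $\cE' \hookrightarrow E \hookrightarrow \cD'$ or $\cS'$) contracts $\cE'_z \widehat\otimes E_y \hookrightarrow E_{zy}$, landing in $\cD'_w \widehat\otimes \cH_x \widehat\otimes E_{zy}$. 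The linear coordinate change $\xi = y-z$, $\eta = z$, under which $E$ is invariant by hypothesis, rewrites this as $\delta(w-\eta) H(x, \xi)$ in $\cD'_w \widehat\otimes \cH_x \widehat\otimes E_{\xi\eta}$. Finally, the hypothesis $E_{\xi\eta} \hookrightarrow E_\xi \widehat\otimes \cD'_\eta$ lets me factor this as $H(x, \xi) \otimes \delta(w-\eta)$ inside $\cH_x \widehat\otimes E_\xi \widehat\otimes \cD'_{w\eta}$; since tensoring with the fixed nonzero element $\delta(w-\eta)$ is a topological embedding, boundedness (relative compactness) of the product implies the same for $H(x, \xi)$ in $\cH_x \widehat\otimes E_\xi$, which is the desired conclusion.

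The principal obstacle I anticipate is the bookkeeping of tensor factors: freely commuting $\cH_x$ past the other factors, interchanging $\widehat\otimes_\pi$ with $\widehat\otimes_\e$, and verifying that Lemma \ref{lemma_eins} remains applicable when embedded inside a longer tensor product all rely on the nuclearity of $\cH$ together with the compatibility of the $\e$-product with continuous injections against a nuclear factor. A minor secondary point, easily dispatched by Ascoli, is the upgrade of $\cL_s$-relative compactness to $\cL_\e$-relative compactness for maps into a complete space, which needs to be flagged explicitly for the relatively compact half of the equivalence.
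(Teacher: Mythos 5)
Your proposal is correct and follows essentially the same route as the paper: continuity of $\tau$ (closed graph) tensored with $\id_{\cH}$ for ``$\Rightarrow$'', barrelledness/equicontinuity to pass between $\cL_s$ and $\cD'_z\widehat\otimes\cH_x\widehat\otimes F_y$, and for ``$\Leftarrow$'' multiplication by $\delta(z-w)$ via Schwartz's Proposition 25, contraction into $E_{yz}$ (Lemma \ref{lemma_eins}), the coordinate change $\xi=y-z$, $\eta=z$, and the embedding $E_{\xi\eta}\hookrightarrow E_\xi\widehat\otimes\cD'_\eta$ to strip off the fixed factor $\delta(w-\eta)$. The only differences are presentational (the paper states the $\cL_s$/$\e$-product comparison once at the start and cites Bourbaki and Schwartz's Analyse for the $\mathfrak{S}$-topology and $\cL_c$ compactness facts you attribute to Ascoli), so no gap remains.
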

\begin{proof}
 Boundedness and relative compactness of $H(x, y-z)$ in $\genfrac{\lbrace}{\rbrace}{0pt}{0}{\cD'_z}{\cS'_z} \widehat\otimes \cH_x \widehat\otimes F_y$ is equivalent to the same property in $\cL_s ( \genfrac{\lbrace}{\rbrace}{0pt}{0}{\cD_z}{\cS_z}, \cH_x \widehat\otimes F_y)$ because $\cD_z$ and $\cS_z$ are barrelled and hence bounded sets are equicontinuous. These sets then are bounded in $\cL_{\mathfrak{S}} ( \genfrac{\lbrace}{\rbrace}{0pt}{0}{\cD_z}{\cS_z}, \cH_x \widehat\otimes F_y)$ for every $\mathfrak{S}$-topology (\cite[Thm.~2, III.~27]{zbMATH03757085}) or relatively compact in $\cL_c ( \genfrac{\lbrace}{\rbrace}{0pt}{0}{\cD_z}{\cS_z}, \cH_x \widehat\otimes F_y)$ (\cite[T.~2, XX, 3;  1, p.~310]{zbMATH04065650}).

``$\Rightarrow$'' follows from hypocontinuity of the mapping
\begin{align*}
 \id_{\cH_x} \bine \tau \colon \cH_x \widehat\otimes E_y &\to \cH_x \widehat\otimes \cD'_z \widehat\otimes F_y \\
K(x,y) &\mapsto K(x, y-z).
\end{align*}

``$\Leftarrow$'': If $H(x, y-z)$ is bounded in $\cD'_z(\cH_x \widehat\otimes F_y)$ then multiplication by $\delta(z-w) \in \cD_z \widehat\otimes \cD'_w$ by \cite[Prop.~25, p.~120]{zbMATH03145498} gives boundedness of $\delta(z-w)H(x, y-z)$ in $\cD'_w \widehat\otimes \cE'_z \widehat\otimes \cH_x \widehat\otimes F_y$ and hence boundedness of $\delta(z-w)H(x, y-z)$ in $\cD'_w \widehat\otimes \cH_x \widehat \otimes E_{yz}$. The linear change of coordinates $y-z = \xi$, $z = \eta$ then gives boundedness of $\delta(\eta - w)H(x, \xi)$ in $\cD'_w \widehat\otimes \cH_x \widehat\otimes E_{\xi\eta}\subset\cD'_w\widehat{\otimes}\cD'_\eta\widehat{\otimes}\cH_x\widehat{\otimes}E_\xi$, which by assumption is only possible if $H(x, \xi)$ is bounded in $\cH_x \widehat\otimes E_\xi$.

A similar argumentation gives the claim about relatively compact sets $H(x, y-z)$.

If $H(x, y-z)$ is bounded in $\cS'_z ( \cH_x \widehat\otimes F_y)$ (or relatively compact) then one has to multiply with $\delta(z-w) \in \cS_z \widehat\otimes \cS'_w$.
\end{proof}

\begin{remark}
 The special case $\cH = \cD'$, $E = F = \cD'_{L^1}$ in Proposition \ref{prop_acht} essentially is equivalence [1] $\Leftrightarrow$ [2] in \cite[Prop.~5, p.543]{zbMATH03163214} (characterization of a relatively compact set of partially summable kernels).
\end{remark}

Finally, we remark that instead of characterizations of bounded (or relatively compact) sets $H(x,y)$ in $\cH_x \widehat\otimes E_y$ by the boundedness (or relative compactness) of their regularizations $H(x, y-z)$ in $\genfrac{\lbrace}{\rbrace}{0pt}{0}{\cD'_z}{\cS'_z} \widehat\otimes \cH_x \widehat\otimes F_y$ or in $\cL_s ( \genfrac{\lbrace}{\rbrace}{0pt}{0}{\cD_z}{\cS_z}, \cH_x \widehat\otimes F_y)$, characterizations can also be given in terms of properties of $\delta(z-y) H(x,y)$ or of $\delta(z-y)H(x, y-z)$. An example for this is the characterization of relatively compact sets $H$ of partially summable kernels (i.e., $H(x,y) \subseteq \cD'_x \widehat\otimes \cD'_{L^1, y}$) in \cite[Prop.~5, p.~243]{zbMATH03163214} ((1) $\Leftrightarrow$ (3), (1) $\Leftrightarrow$ (4)).

\section*{Acknowledgements}

In the preparation of this work, E.~A. Nigsch was supported by grant P26859-N25 of the Austrian Science Fund (FWF).




\end{document}